\numberwithin{equation}{section}
\theoremstyle{plain}
\newtheorem{theorem}{Theorem}
\newtheorem{conjecture}[theorem]{Conjecture}
\newtheorem{lemma}[theorem]{Lemma}
\newtheorem{corollary}[theorem]{Corollary}
\newtheorem{proposition}[theorem]{Proposition}
\newtheorem*{theorem*}{Theorem}
\newtheorem*{conjecture*}{Conjecture}
\theoremstyle{definition}
\newtheorem{remark}[theorem]{Remark}
\newcommand{\CC}{{\mathbb{C}}}
\newcommand{\QQ}{{\mathbb{Q}}}
\newcommand{\RR}{{\mathbb{R}}}
\newcommand{\ZZ}{{\mathbb{Z}}}
\def\p{\partial }
\begin{document}
\title{A geometric definition of Gabrielov numbers}
\author{Wolfgang Ebeling and Atsushi Takahashi}
\address{Institut f\"ur Algebraische Geometrie, Leibniz Universit\"at Hannover, Postfach 6009, D-30060 Hannover, Germany}
\email{ebeling@math.uni-hannover.de}
\address{
Department of Mathematics, Graduate School of Science, Osaka University, 
Toyonaka Osaka, 560-0043, Japan}
\email{takahashi@math.sci.osaka-u.ac.jp}
\subjclass[2010]{32S25, 32S55, 14E16, 14L30}
\begin{abstract} Gabrielov numbers describe certain Coxeter-Dynkin diagrams of the 14 exceptional unimodal singularities and play a role in Arnold's strange duality.
In a previous paper, the authors defined Gabrielov numbers of a cusp singularity with an action of a finite abelian subgroup $G$ of ${\rm SL}(3,\CC)$ using the Gabrielov numbers of the cusp singularity and data of the group $G$. Here we consider a crepant resolution $Y \to \CC^3/G$ and the preimage  $Z$ of the image of the Milnor fibre of the cusp singularity under the natural projection $\CC^3 \to \CC^3/G$. Using the McKay correspondence, we compute the homology of the pair $(Y,Z)$. We construct a basis of the relative homology group $H_3(Y,Z;\QQ)$ with a Coxeter-Dynkin diagram where one can read off the Gabrielov numbers.
\end{abstract}
\maketitle
\section*{Introduction}
V.~I.~Arnold \cite{Ar} observed a strange duality between the 14 exceptional unimodal singularities. For these singularities, two triples of numbers are defined. On one hand, by I.~V.~Dolgachev \cite{Do} the 14 exceptional unimodal singularities are related to triangles in the hyperbolic plane. If a singularity $X$ corresponds to a triangle with angles $\frac{\pi}{\alpha_1}$, $\frac{\pi}{\alpha_2}$, $\frac{\pi}{\alpha_3}$, then the {\em Dolgachev numbers} of $X$ are the numbers $\alpha_1, \alpha_2, \alpha_3$. On the other hand, A.~M.~Gabrielov \cite{Ga} has shown that such a singularity has a weakly distinguished basis of vanishing cycles with a Coxeter-Dynkin diagram being an extension of a diagram in the shape of the letter T. Namely, it is the extension of the diagram in Fig.~\ref{FigTpqr+} by an additional vertex $\delta_{\mu'+1}$ connected to the vertex $\delta_{\mu'}$ and to no other vertices. The lengths of the three arms $\gamma'_1, \gamma'_2, \gamma'_3$ are called the 
{\em Gabrielov numbers} of the singularity. Arnold observed that there is an involution $X \to X^\vee$ on the set of the 14 singularities such that the Dolgachev numbers of $X$ are the Gabrielov numbers of $X^\vee$ and the Gabrielov numbers of $X$ are the Dolgachev numbers of $X^\vee$.

By setting the module in the 14 singularity classes equal to zero, one finds quasi-homogeneous polynomials which can be given by invertible polynomials in three variables, i.e.\ polynomials which have as many monomials as they have variables. The authors have shown that there is a strange duality between non-degenerate invertible polynomials $f(x_1,x_2,x_3)$ in three variables \cite{ET1}. Namely, they defined Dolgachev and Gabrielov numbers for them and showed that the Berglund-H\"ubsch duality between these polynomials generalizes Arnold's strange duality. The Gabrielov numbers are defined by considering the deformation $f(x_1,x_2,x_3)-x_1x_2x_3$ which is right equivalent to a cusp singularity $x_1^{\gamma'_1} + x_2^{\gamma'_2} + x_3^{\gamma'_3} - cx_1x_2x_3$ for some $c \in \RR$, $c  \gg 0$. It has a Coxeter-Dynkin diagram as in Fig.~\ref{FigTpqr+}.

In later work \cite{ET}, the authors considered pairs $(f,G)$ where $f$ is an invertible polynomial in three variables and $G$ is a finite abelian group of symmetries of $f$ contained in ${\rm SL}(3,\CC)$. There is a dual pair $(f^T, G^T)$ where $f^T$ is the Berglund-H\"ubsch transpose and $G^T$ the dual group defined by Berglund and Henningson. We defined Gabrielov numbers for the pairs $(f,G)$ and Dolgachev numbers for the dual pairs $(f^T,G^T)$ and showed that they are the same. The Gabrielov numbers are defined in terms of the integers  $\gamma'_1, \gamma'_2, \gamma'_3$ associated to $f$ as above and data of the group $G$. Namely, for $i=1,2,3$, let $K_i$ be the maximal subgroup of $G$ fixing the $i$-th coordinate $x_i$, whose 
order $|K_i|$ is denoted by $n_i$. Then the Gabrielov numbers are the numbers $\frac{\gamma'_i}{|G/K_i|}$ repeated $n_i$ times ($i=1,2,3$) where numbers equal to one have to be omitted.

The objective of the present paper is to show that the Gabrielov numbers can be defined in a similar way as in Gabrielov's original definition, namely as lengths of arms in a certain Coxeter-Dynkin diagram. Roughly speaking, we consider a crepant resolution $h: Y \to \CC^3/G$ and the preimage  $Z=h^{-1}(W)$ of the image $W$ of the Milnor fibre $V$ of the cusp singularity under the natural projection $\CC^3 \to \CC^3/G$.  We compute the cohomology of the pair $(Y,Z)$ using the McKay correspondence for finite abelian subgroups of ${\rm SL}(3,\CC)$ \cite{IR}. We construct a basis of $H_3(Y,Z;\QQ)$ such that the Coxeter-Dynkin diagram contains a star-shaped graph with arms of lengths equal to the Gabrielov numbers and all but the central vertex corresponding to cycles with self-intersection number $-2$ (cf.\ Fig.~\ref{FighatTpqr}).
\begin{sloppypar}

{\bf Acknowledgements}.\  
This work has been supported 
by the DFG-programme SPP1388 ''Representation Theory'' (Eb 102/6-1).
The second named author is also supported 
by JSPS KAKENHI Grant Number 24684005. 
\end{sloppypar}
\section{The McKay correspondence for finite abelian subgroups of ${\rm SL}(3,\CC)$}
Let $(\gamma'_1,\gamma'_2,\gamma'_3)$ be a triple of positive integers such that 
\begin{equation}\label{1.1}
\Delta(\gamma'_1,\gamma'_2,\gamma'_3):= \gamma'_1\gamma'_2\gamma'_3- \gamma'_2 \gamma'_3-\gamma'_1 \gamma'_3 - \gamma'_1 \gamma'_2>0.
\end{equation}
We associate to $(\gamma'_1,\gamma'_2,\gamma'_3)$ a polynomial 
$f(x_1,x_2,x_3):= x_1^{\gamma'_1} + x_2^{\gamma'_2} + x_3^{\gamma'_3} - cx_1x_2x_3$ for $c \in \RR$, $c  \gg 0$. 
Let $B_\varepsilon(0) \subset \CC^3$ be an open ball of sufficiently small radius $\varepsilon >0$ around the origin and 
let $V:= f^{-1}(\eta) \cap B_\varepsilon(0)$ be the Milnor fibre for sufficiently small $\eta \in \CC$, $0 < \eta \ll \varepsilon$.
Set 
\begin{equation}
\mu':=2+\sum_{i=1}^3\left(\gamma'_i-1\right).
\end{equation}
It is well-known that $\mu'$ is the Milnor number of the cusp singularity $V$.
Let $G$ be a finite abelian subgroup of ${\rm SL}(3,\CC)$ acting diagonally on $\CC^3$, 
and hence a subgroup of ${\rm SU}(3,\CC)$, such that $f$ is invariant under its natural action. 
Denote by $W$ the subvariety of $X:=B_\varepsilon(0)/G$ defined by the image of the smooth analytic subspace 
$V$ of $B_\varepsilon(0)$ under the natural projection $\pi:B_\varepsilon(0)\longrightarrow X$. 
Define a complex manifold $Y$ and a holomorphic map $h:Y\longrightarrow X$ by the following fibre product$:$
\begin{equation}
\xymatrix{
Y \ar[r]\ar[d]_h & \widetilde{\CC^3/G}\ar[d]\\
X=B_\varepsilon(0)/G\ar[r] & \CC^3/G
},
\end{equation}
where $\widetilde{\CC^3/G}$ denotes a crepant resolution of $\CC^3/G$ (e.g. $G\text{-}{\rm Hilb}(\CC^3)$). 
Then the map $h:Y\longrightarrow X$ gives a crepant resolution of the quotient $X$.
Set $Z:=h^{-1}(W)$ and denote the natural embedding $Z\hookrightarrow Y$ by $\iota$.
From the following relative homology long exact sequence
\[
\xymatrix{
\ar[r] & H_{q+1}(Y,Z;\QQ) \ar[r] & H_q(Z;\QQ)\ar[r]^{\iota_*} & H_q(Y;\QQ) \ar[r] & H_{q}(Y,Z;\QQ) \ar[r] &
}
\]
we get 
\begin{subequations}
\begin{equation}
H_p(Y,Z;\QQ)=0,\quad q \ne 2,3,4,
\end{equation}
\begin{equation}
H_4(Y,Z;\QQ)\cong H_4(Y;\QQ),
\end{equation}
and the exact sequence
\begin{equation}
\xymatrix{
0\ar[r] & H_3(Y,Z;\QQ) \ar[r] & H_2(Z;\QQ)\ar[r]^{\iota_\ast} & H_2(Y;\QQ) \ar[r] & H_2(Y,Z;\QQ) \ar[r] & 0.
}
\end{equation}
\end{subequations}
By this exact sequence, we shall always identify $H_3(Y,Z;\QQ)$ with its image in $H_2(Z;\QQ)$.
Similarly, from the relative cohomology long exact sequence we have 
\begin{subequations}
\begin{equation}
H^p(Y,Z;\QQ)=0,\quad q \ne 2,3,4,
\end{equation}
\begin{equation}
H^4(Y,Z;\QQ)\cong H^4(Y;\QQ),
\end{equation}
and the exact sequence
\begin{equation}
\xymatrix{
0\ar[r] & H^2(Y,Z;\QQ) \ar[r] & H^2(Y;\QQ)\ar[r]^{\iota^\ast} & H^2(Z;\QQ) \ar[r] & H^3(Y,Z;\QQ) \ar[r] & 0.
}
\end{equation}
\end{subequations}
Before describing the cohomology groups more in detail, we prepare some terminologies.
For each $g \in G$, denote by $N_g$ the dimension of the fixed locus which is a linear subspace of $\CC^3$.
Each element $g\in G$ has a unique expression of the form
\begin{equation}
g={\rm diag}(e^{\frac{2\pi a_1}{r}}, e^{\frac{2\pi a_2}{r}},e^{\frac{2\pi a_3}{r}}) \quad \mbox{with } 0 \leq a_i < r,
\end{equation}
where $r$ is the order of $g$.
The age of $g$, which is introduced in \cite{IR}, is defined as the rational number 
\begin{equation}
{\rm age}(g) := \frac{1}{r}\sum_{i=1}^3 a_i. 
\end{equation}
Since we assume that $G\subset {\rm SL}_3(\CC)$, then this number is an integer. 
For $i=1,2,3$, let $K_i$ be the maximal subgroup of $G$ fixing the $i$-th coordinate $x_i$, whose 
order $|K_i|$ is denoted by $n_i$.
\begin{proposition} \label{Prop|G|}
We have
\begin{equation}
\left|G\right|=1+2j_G+\sum_{i=1}^3\left(n_i-1\right),
\end{equation}
where $j_G$ is the number of elements  $g \in G$ such that $age(g)=1$ and $N_g=0$.
\end{proposition}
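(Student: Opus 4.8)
The plan is to partition $G$ according to the dimension $N_g\in\{0,1,2,3\}$ of the fixed locus of each $g$, to count the elements in each stratum, and to sum. Writing $g={\rm diag}(\lambda_1,\lambda_2,\lambda_3)$, the coordinate $x_i$ is fixed precisely when $\lambda_i=1$, so $N_g$ equals the number of indices $i$ with $\lambda_i=1$. The constraint $G\subset{\rm SL}(3,\CC)$, i.e.\ $\lambda_1\lambda_2\lambda_3=1$, will do most of the work.

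First I would dispose of the easy strata. If $N_g=3$ then $g$ is the identity, contributing $1$. If $N_g=2$, two of the $\lambda_i$ equal $1$, whence the third equals $1$ as well by the determinant condition; thus there is no element with $N_g=2$. If $N_g=1$, say $\lambda_i=1$ and $\lambda_j\ne1$ for $j\ne i$, then $g$ is a non-identity element of $K_i$; conversely any non-identity $g\in K_i$ has $\lambda_i=1$ and, by the determinant condition, its remaining two eigenvalues are nontrivial, so $N_g=1$ with $x_i$ the unique fixed coordinate. Hence the elements with $N_g=1$ are exactly the non-identity elements of $K_1$, $K_2$, $K_3$, and since such an element fixes exactly one coordinate these three sets are pairwise disjoint. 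This stratum therefore contributes $\sum_{i=1}^3(n_i-1)$.

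The remaining stratum $N_g=0$ consists of the $g$ with all $\lambda_i\ne1$, equivalently all exponents $a_i\ne0$. For such $g$ one has $0<a_1+a_2+a_3<3r$ together with $r\mid a_1+a_2+a_3$, so ${\rm age}(g)\in\{1,2\}$, the value $0$ being excluded since it forces $g$ to be the identity. The key point is the involution $g\mapsto g^{-1}$: it preserves the condition $N_g=0$, and if $g$ has order $r$ and exponents $a_i$ with $0<a_i<r$, then $g^{-1}$ has exponents $r-a_i$ with $0<r-a_i<r$, so ${\rm age}(g^{-1})=\frac{1}{r}\sum_{i=1}^3(r-a_i)=3-{\rm age}(g)$. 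Thus inversion interchanges the age-$1$ and age-$2$ elements with $N_g=0$, showing these two sets have the same cardinality $j_G$; the stratum $N_g=0$ contributes $2j_G$.

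Adding the four contributions gives $\left|G\right|=1+0+\sum_{i=1}^3(n_i-1)+2j_G$, which is the claimed formula. I expect no serious obstacle here; the only points requiring care are the verification that inversion sends the exponents $a_i$ to $r-a_i$ (valid because $0<a_i<r$ exactly when $N_g=0$) and the disjointness of the sets $K_i\setminus\{{\rm id}\}$, both of which reduce to the determinant condition $\lambda_1\lambda_2\lambda_3=1$.
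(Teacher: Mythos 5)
Your proof is correct and follows exactly the route the paper intends: its one-line proof (``classification of elements of $G$ according to their ages and dimensions of fixed loci'') is precisely the stratification by $N_g$ that you carry out in full. You supply the details the paper leaves implicit --- the determinant condition ruling out $N_g=2$ and forcing disjointness of the $K_i\setminus\{{\rm id}\}$, and the inversion pairing ${\rm age}(g^{-1})=3-{\rm age}(g)$ matching the age-$1$ and age-$2$ elements with $N_g=0$ --- all of which check out.
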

\begin{proof}
This follows from the classification of elements of $G$ according to their ages and dimensions of fixed loci. 
\end{proof}
\begin{proposition}[Ito--Reid, Corollary~1.5 and Theorem~1.6 in \cite{IR}]
We have 
\begin{subequations}
\begin{equation}
\dim_\QQ H_2(Y;\QQ)=\dim_\QQ H^2(Y;\QQ)=\#\{g\in G~\vert~ age(g)=1\}=j_G+\sum_{i=1}^3\left(n_i-1\right),
\end{equation}
\begin{equation}
\dim_\QQ H_4(Y;\QQ)= \dim_\QQ H^2_c(Y;\QQ)=\#\{g\in G~\vert~ age(g)=1, N_g=0\}=j_G.
\end{equation}
\end{subequations}
In particular, the natural map
\begin{equation}
H^2_c(Y;\QQ)\longrightarrow H^2(Y;\QQ)
\end{equation}
is injective.
\end{proposition}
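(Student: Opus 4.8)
The plan is to assemble the statement from three ingredients: the purely homological duality isomorphisms, the Ito--Reid McKay correspondence (which we are free to invoke, as in \cite{IR}), and the age classification already carried out in Proposition~\ref{Prop|G|}. First I would dispose of the two equalities relating homology and cohomology. Over the field $\QQ$ the universal coefficient theorem gives $H^2(Y;\QQ)\cong\Hom(H_2(Y;\QQ),\QQ)$, so $\dim_\QQ H_2(Y;\QQ)=\dim_\QQ H^2(Y;\QQ)$. Since $Y$ is a smooth oriented real $6$-manifold, Poincar\'e--Lefschetz duality yields $H^k_c(Y;\QQ)\cong H_{6-k}(Y;\QQ)$; taking $k=2$ gives $\dim_\QQ H_4(Y;\QQ)=\dim_\QQ H^2_c(Y;\QQ)$.

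Next I would compute the Betti numbers. The heart of the matter is the Ito--Reid McKay correspondence: for a crepant resolution $Y$ of $\CC^3/G$ the even cohomology $H^{2k}(Y;\QQ)$ has a basis indexed by the elements $g\in G$ with ${\rm age}(g)=k$, whence $\dim_\QQ H^2(Y;\QQ)=\#\{g\mid{\rm age}(g)=1\}$ and $\dim_\QQ H^4(Y;\QQ)=\#\{g\mid{\rm age}(g)=2\}$. Combining the second formula with the duality step and the universal coefficient theorem gives $\dim_\QQ H^2_c(Y;\QQ)=\dim_\QQ H_4(Y;\QQ)=\dim_\QQ H^4(Y;\QQ)=\#\{g\mid{\rm age}(g)=2\}$. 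The involution $g\mapsto g^{-1}$ sends an element with $N_g=0$ and ${\rm age}(g)=1$ to one with $N_{g^{-1}}=0$ and ${\rm age}(g^{-1})=2$, and conversely, because for $N_g=0$ one has ${\rm age}(g)+{\rm age}(g^{-1})=3$; since a vanishing exponent forces the age to be at most $1$, every element of age $2$ automatically has $N_g=0$, and this yields $\#\{g\mid{\rm age}(g)=2\}=\#\{g\mid{\rm age}(g)=1,\ N_g=0\}=j_G$, the last equality being the definition of $j_G$.

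It then remains to identify $\#\{g\mid{\rm age}(g)=1\}$ with $j_G+\sum_{i=1}^3(n_i-1)$, which is exactly the age classification behind Proposition~\ref{Prop|G|}. The age-one elements with $N_g=0$ number $j_G$ by definition, so I would show that the age-one elements with $N_g\ge 1$ are precisely the nontrivial elements of $K_1,K_2,K_3$. Indeed, if $g\in K_i\setminus\{e\}$ then its $i$-th exponent vanishes and, as $\det g=1$, the other two exponents sum to $r$, so ${\rm age}(g)=1$ and $N_g\ge 1$; conversely an age-one element fixing some coordinate lies in the corresponding $K_i$. The sets $K_i\setminus\{e\}$ are pairwise disjoint, since an element fixing two coordinates fixes the third as well and is therefore the identity, so together they contribute $\sum_{i=1}^3(n_i-1)$.

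Finally, for the injectivity of $H^2_c(Y;\QQ)\to H^2(Y;\QQ)$ I would argue that the image is spanned by the classes of the compact exceptional divisors of $h$, namely those corresponding to the age-one elements with $N_g=0$. In the Ito--Reid picture these classes form a subset of the basis of $H^2(Y;\QQ)$ indexed by all age-one elements, hence are linearly independent; equivalently, their intersection pairing with the compact exceptional curves over the origin is nondegenerate. Thus the image has dimension $j_G=\dim_\QQ H^2_c(Y;\QQ)$ and the map has trivial kernel. I expect this last step to be the main obstacle, since it is the only place where one needs the compatibility of the forgetful map $H^2_c(Y;\QQ)\to H^2(Y;\QQ)$ with the McKay basis, rather than a bare dimension count; once that compatibility is in hand, injectivity is forced.
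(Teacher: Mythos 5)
Your proof is correct. Note first that the paper gives no argument of its own for this Proposition: it is quoted wholesale from Ito--Reid (Corollary~1.5 and Theorem~1.6 of \cite{IR}), so there is no internal proof to match, and your reconstruction is exactly the kind of filling-in the citation presupposes. The purely formal steps are fine: universal coefficients over $\QQ$ give $\dim_\QQ H_2=\dim_\QQ H^2$ and $\dim_\QQ H_4=\dim_\QQ H^4$, and Poincar\'e duality on the smooth oriented $6$-manifold $Y$ gives $H^2_c(Y;\QQ)\cong H_4(Y;\QQ)$. Your detour through age-$2$ elements, with the involution $g\mapsto g^{-1}$ and the observation that ${\rm age}(g)+{\rm age}(g^{-1})$ equals the number of nonvanishing exponents, is a clean way to land on $j_G$; it correctly uses $G\subset{\rm SL}(3,\CC)$ twice (a vanishing exponent forces the other two to sum to $0$ or $r$, hence age $\leq 1$; and fixing two coordinates forces $g={\rm id}$, giving disjointness of the $K_i\setminus\{e\}$), and the resulting count of age-one elements is precisely the classification underlying Proposition~\ref{Prop|G|}, which the paper itself reuses later (in the proof of Lemma~\ref{lembar} and in the formula for $\dim_\QQ H_2(Y,Z;\QQ)$). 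For the injectivity, your argument is the right one and is indeed the only step requiring more than a dimension count: under duality the map $H^2_c(Y;\QQ)\to H^2(Y;\QQ)$ sends the fundamental classes of the compact exceptional divisors (which span $H_4(Y;\QQ)$, as $Y$ retracts onto the two-dimensional exceptional fibre) to their cohomology classes, and since $G$ is abelian the resolution is toric and the classes of \emph{all} exceptional prime divisors form a basis of $H^2(Y;\QQ)$ indexed by junior elements, so the compact ones are a subset of a basis and hence independent. The only caution is that this last basis statement is itself part of the Ito--Reid package rather than something you can get from the Betti numbers alone; since the paper's hypotheses put you in the abelian case, invoking it is legitimate, and your proof stands.
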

For $i=1,2,3$, set
\begin{equation}
\gamma_i:=\frac{\gamma'_i}{\left|G/K_i\right|}.
\end{equation}
Since the analytic subspace $W$ of $X$ intersects the locus $\{x_2=x_3=0\}/G$ exactly at $\gamma_1$ points in 
$X\backslash\{0\}$, it turns out that $W$ has $A_{n_i-1}$-singularities at $\gamma_i$ distinct points for $i=1,2,3$ 
and it is smooth except for these $(\gamma_1+\gamma_2+\gamma_3)$ points. 
Therefore, we have the following.
\begin{proposition} \label{prop:H2Z}
Let 
\[ [E^i_{j,k}], \quad  i=1,2,3; \ j=1, \ldots, \gamma_i; \ k=1, \ldots n_i-1, \]
be the irreducible components of the exceptional divisors in $Z$
whose dual graph for fixed $i=1,2,3$ and $j=1, \ldots, \gamma_i$ is given by Fig.~\ref{FigAn}.  
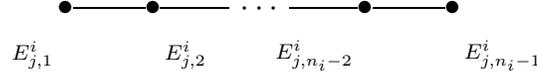
\begin{figure}
\[
\xymatrix{ 
  *{\bullet} \ar@{-}[r] \ar@{}_{E^i_{j,1}}[d]  &  *{\bullet} \ar@{-}[r] \ar@{}^{E^i_{j,2}}[d]  &  {\cdots} \ar@{-}[r]  & *{\bullet} \ar@{-}[r]    \ar@{}_{E^i_{j,n_i-2}}[d]  &*{\bullet} \ar@{}^{E^i_{j,n_i-1}}[d]   \\
  & & & &
  }
\]
\caption{Dual graph of $[E^i_{j,k}]$ for fixed $i$ and $j$} \label{FigAn}
\end{figure}
We have
\begin{equation}
H^2(Z;\QQ)\cong H^2(W;\QQ) \oplus
\left(\bigoplus_{i=1}^3\bigoplus_{j=1}^{\gamma_i}\bigoplus_{k=1}^{n_i-1} \QQ [E^i_{j,k}]^*\right),
\end{equation}
where $\{[E^i_{j,k}]^*\}$ is the dual basis of $\{[E^i_{j,k}]\}$.
\end{proposition}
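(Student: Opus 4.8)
The plan is to compute $H^2(Z;\QQ)$ through the Leray spectral sequence of the resolution morphism $\rho := h|_Z \colon Z \to W$, exploiting that $Z$ is obtained from $W$ by simultaneously resolving its isolated $A_{n_i-1}$-singularities. Since $h\colon Y \to X$ is a crepant resolution and $W$ is a surface with the stated $A_{n_i-1}$-singularities at the $\gamma_1+\gamma_2+\gamma_3$ points and smooth elsewhere, $\rho$ restricts to an isomorphism over the smooth locus $W^{\circ}$, while over each $A_{n_i-1}$-point its fibre is the chain $E^i_{j,1},\dots,E^i_{j,n_i-1}$ of smooth rational $(-2)$-curves in the $A_{n_i-1}$-configuration of Fig.~\ref{FigAn}.

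First I would determine the higher direct image sheaves $R^q\rho_*\QQ$. One has $R^0\rho_*\QQ=\QQ_W$, and for $q\ge 1$ these sheaves are supported on the finite singular set, with stalk at a singular point equal to $H^q$ of the fibre over it. Each such fibre is a tree of projective lines, hence simply connected, with $H^0=\QQ$, $H^1=0$ and $H^2\cong\QQ^{\,n_i-1}$ generated by the fundamental classes of its components $[E^i_{j,k}]$. Therefore $R^1\rho_*\QQ=0$ and $R^2\rho_*\QQ$ is a skyscraper sheaf with $H^0(W,R^2\rho_*\QQ)=\bigoplus_{i=1}^3\bigoplus_{j=1}^{\gamma_i}\QQ^{\,n_i-1}$.

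Next I would feed this into $E_2^{p,q}=H^p(W,R^q\rho_*\QQ)\Rightarrow H^{p+q}(Z;\QQ)$. Since $R^1\rho_*\QQ=0$ and $R^2\rho_*\QQ$ is a skyscraper (so $H^p$ of it vanishes for $p>0$), the only potentially nonzero terms in total degree $2$ are $E_2^{2,0}=H^2(W;\QQ)$ and $E_2^{0,2}=\bigoplus_{i,j}\QQ^{\,n_i-1}$. The term $E_2^{2,0}$ survives because the differentials into and out of it involve zero groups ($E_2^{0,1}=E_2^{4,-1}=0$), so the edge map $\rho^*\colon H^2(W;\QQ)\to H^2(Z;\QQ)$ is injective with image the subspace $\rho^*H^2(W;\QQ)$. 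It then remains to see that $E_2^{0,2}$ survives as well, i.e. that the edge homomorphism $H^2(Z;\QQ)\to E_2^{0,2}=H^0(W,R^2\rho_*\QQ)$ given by restriction to the fibres is surjective; this is exactly where the geometry of the $(-2)$-curves enters.

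To this end I would exhibit the dual classes directly. For fixed $(i,j)$ the intersection matrix of $E^i_{j,1},\dots,E^i_{j,n_i-1}$ is the negative of the $A_{n_i-1}$ Cartan matrix, hence negative definite, and curves in distinct fibres are disjoint; thus the classes $[E^i_{j,k}]$ are linearly independent in $H_2(Z;\QQ)$, and the intersection pairing yields classes $[E^i_{j,k}]^*\in H^2(Z;\QQ)$ dual to them. By construction each $[E^i_{j,k}]^*$ restricts on the fibre over the relevant point to the dual of $[E^i_{j,k}]$ and to zero on the other fibres, so the restrictions of all the $[E^i_{j,k}]^*$ form a basis of $E_2^{0,2}$. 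This makes the edge map surjective, forces $E_\infty^{0,2}=E_2^{0,2}$ (killing in particular the a priori possible differential $d_3\colon E_3^{0,2}\to E_3^{3,0}$), and provides a splitting complementary to $\rho^*H^2(W;\QQ)$, yielding the asserted decomposition with $\{[E^i_{j,k}]^*\}$ as the dual basis. I expect this last step — passing from the abstract surviving term $E_\infty^{0,2}$ to the explicit span of the geometric classes $[E^i_{j,k}]^*$ via the non-degeneracy of the $A_{n_i-1}$ intersection form — to be the main obstacle, since it is what rules out hidden higher differentials and pins down the stated basis.
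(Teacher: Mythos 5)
Your argument is correct, but note that the paper offers no proof of this proposition at all: it is stated with ``Therefore, we have the following,'' as the standard cohomological decomposition for the simultaneous minimal resolution $\rho = h|_Z\colon Z\to W$ of the rational double points of type $A_{n_i-1}$ identified in the preceding paragraph. Your Leray spectral sequence computation is a legitimate substantiation of that standard fact, and you isolate exactly the right points: $R^1\rho_*\QQ=0$ and the skyscraper nature of $R^2\rho_*\QQ$ leave only $E_2^{2,0}=H^2(W;\QQ)$ and $E_2^{0,2}$ in total degree $2$; the sole possible obstruction is $d_3\colon E_3^{0,2}\to E_3^{3,0}=H^3(W;\QQ)$, and you kill it by exhibiting the classes $[E^i_{j,k}]^*$ explicitly --- images under $H^2_c(Z;\QQ)\to H^2(Z;\QQ)$ of rational combinations of Poincar\'e duals of the exceptional curves, using invertibility of the negative definite $A_{n_i-1}$ intersection matrices --- whose fibrewise restrictions give a basis of $H^0(W,R^2\rho_*\QQ)$, forcing the edge map to be surjective. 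Since $\rho_*[E^i_{j,k}]=0$, pullbacks $\rho^*\alpha$ evaluate to zero on every $[E^i_{j,k}]$, so your splitting also delivers the ``dual basis'' normalization in the statement. Two points worth making explicit in a write-up: the stalk identification $(R^q\rho_*\QQ)_w\cong H^q(\rho^{-1}(w);\QQ)$ uses properness of $h|_Z$ (which the paper records later when defining $h^{!}$), and the description of the fibres over the singular points as $A_{n_i-1}$-chains of rational curves is part of the proposition's hypotheses (Fig.~\ref{FigAn}), so you may take it as given rather than rederive it from crepancy. A more elementary route to the same conclusion, closer to what the paper implicitly invokes, is Mayer--Vietoris with small neighbourhoods of the $\gamma_1+\gamma_2+\gamma_3$ singular points, whose links are lens spaces and hence rational homology spheres; your spectral sequence proves the same statement while making the degeneration mechanism visible.
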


As usual, in all the figures, $\bullet$ refers to an element of self-intersection number $-2$ and two vertices are connected by an edge if they correspond to elements with intersection number one.
\begin{corollary}\label{cor4}
The kernel of the map $\iota^\ast:H^2(Y;\QQ)\longrightarrow H^2(Z;\QQ)$ is $H^2_c(Y;\QQ)$, namely, we have 
\begin{equation}
H^2(Y,Z;\QQ)\cong H^2_c(Y;\QQ).
\end{equation}
Dually, the image of the map $\iota_\ast:H_2(Z;\QQ)\longrightarrow H_2(Y;\QQ)$ is given by
\begin{equation}
\bigoplus_{i=1}^3\bigoplus_{k=1}^{n_i-1}\QQ [E^i_{k}],
\end{equation}
where, for fixed $i=1,2,3$, $\{E^i_{k}\}$ are irreducible components of the exceptional divisors in $Y$ corresponding to 
elements of $K_i\setminus\{{\rm id}_G\}$ whose dual graph is given by Fig.~\ref{Fig:An}.  
\begin{figure}
\[
\xymatrix{ 
  *{\bullet} \ar@{-}[r] \ar@{}_{E^i_{1}}[d]  &  *{\bullet} \ar@{-}[r] \ar@{}^{E^i_{2}}[d]  &  {\cdots} \ar@{-}[r]  & *{\bullet} \ar@{-}[r]    \ar@{}_{E^i_{n_i-2}}[d]  &*{\bullet} \ar@{}^{E^i_{n_i-1}}[d]   \\
  & & & &
  }
\]
\caption{Dual graph of $E^i_{k}$ for fixed $i$} \label{Fig:An}
\end{figure}
\end{corollary}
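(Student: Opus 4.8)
The plan is to deduce both statements from two geometric observations together with the dimension count of the Ito--Reid proposition, using the duality over $\QQ$ between the relative homology and cohomology exact sequences displayed above (so that $H^2(Y,Z;\QQ)\cong H_2(Y,Z;\QQ)^\vee$, $\iota^\ast=(\iota_\ast)^\vee$, and in particular $\dim_\QQ H^2(Y,Z;\QQ)=\dim_\QQ H_2(Y,Z;\QQ)$). Since the cohomology sequence identifies $H^2(Y,Z;\QQ)$ with $\ker\iota^\ast$ and the homology sequence identifies $H_2(Y,Z;\QQ)$ with $\mathrm{coker}\,\iota_\ast$, the whole statement reduces to pinning down $\mathrm{im}\,\iota_\ast$ and $\ker\iota^\ast$.

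First I would analyse the local structure of $X$ away from the origin. Writing $C_i\subset X$ for the image of the $x_i$-axis, the singular locus of $X\setminus\{0\}$ is $\bigcup_i(C_i\setminus\{0\})$, and along $C_i\setminus\{0\}$ the transverse singularity is $\CC^2/K_i$; since $K_i\subset{\rm SL}(3,\CC)$ fixes $x_i$ it acts on the remaining two coordinates through a cyclic subgroup of ${\rm SL}(2,\CC)$, so this transverse singularity is of type $A_{n_i-1}$, consistently with the $A_{n_i-1}$-singularities of $W$ recorded before Proposition~\ref{prop:H2Z}. Consequently the crepant resolution $h$ restricts over a punctured-disc neighbourhood of $C_i\setminus\{0\}$ to the family of minimal resolutions, and its $n_i-1$ exceptional divisors $E^i_1,\dots,E^i_{n_i-1}$ are $\PP^1$-bundles over $C_i\setminus\{0\}$. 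Each nontrivial element of $K_i$ has $N_g=1$ and, being diagonal in ${\rm SL}(3,\CC)$ with a single fixed coordinate, has ${\rm age}(g)=1$; under the Ito--Reid McKay correspondence the $\PP^1$-fibre classes $[E^i_k]$ are exactly the generators of $H_2(Y;\QQ)$ attached to $K_i\setminus\{{\rm id}_G\}$, so $\{[E^i_k]\}_{i,k}$ are $\sum_{i=1}^3(n_i-1)$ linearly independent classes.

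Next I would establish the two inclusions. For the homology statement, $W$ meets each $C_i$ in $\gamma_i$ points, all lying in $C_i\setminus\{0\}$ because $0\notin V$ and hence $0\notin W$; over each such point $Z$ contains the resolution chain, and the curve $E^i_{j,k}$ is the $\PP^1$-fibre of $E^i_k$ over that point. Since $C_i\setminus\{0\}$ is connected, all these fibres are homologous in $Y$, so $\iota_\ast[E^i_{j,k}]=[E^i_k]$ independently of $j$; this gives $\bigoplus_{i,k}\QQ[E^i_k]\subseteq\mathrm{im}\,\iota_\ast$ and $\dim_\QQ\mathrm{im}\,\iota_\ast\ge\sum_{i=1}^3(n_i-1)$. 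For the cohomology statement, $H^2_c(Y;\QQ)\cong H_4(Y;\QQ)$ is spanned by the Poincar\'e duals of the $j_G$ compact exceptional divisors, all contained in $h^{-1}(0)$; as $Z\cap h^{-1}(0)=\emptyset$ these restrict to $0$ on $Z$, whence $H^2_c(Y;\QQ)\subseteq\ker\iota^\ast=H^2(Y,Z;\QQ)$ and $\dim_\QQ H^2(Y,Z;\QQ)\ge j_G$.

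Finally I would close the argument by dimension count, which avoids any direct analysis of $\iota_\ast$ on the classes coming from $W$. From the homology exact sequence and the first inclusion, $\dim_\QQ H_2(Y,Z;\QQ)=\dim_\QQ H_2(Y;\QQ)-\dim_\QQ\mathrm{im}\,\iota_\ast\le j_G$, while duality and the second inclusion give $\dim_\QQ H_2(Y,Z;\QQ)=\dim_\QQ H^2(Y,Z;\QQ)\ge j_G$. Hence equality holds throughout: $\dim_\QQ\mathrm{im}\,\iota_\ast=\sum_{i=1}^3(n_i-1)$, so $\mathrm{im}\,\iota_\ast=\bigoplus_{i,k}\QQ[E^i_k]$, and $\dim_\QQ\ker\iota^\ast=j_G=\dim_\QQ H^2_c(Y;\QQ)$, so the inclusion of the third paragraph forces $\ker\iota^\ast=H^2_c(Y;\QQ)$. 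I expect the main obstacle to be the local analysis of the second paragraph: verifying that the crepant resolution is transverse-minimal and equisingular along the curves $C_i$, and matching the resulting fibre classes $[E^i_k]$ precisely with the McKay generators indexed by $K_i\setminus\{{\rm id}_G\}$; once the Ito--Reid dimensions and these identifications are in hand, the remaining steps are formal.
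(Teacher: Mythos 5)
Your proposal is correct, but it closes the argument by a genuinely different mechanism than the paper. Both proofs rest on the same geometric input $\iota_\ast[E^i_{j,k}]=[E^i_k]$ (which the paper asserts ``from the construction'' and you justify via the $\PP^1$-bundle structure of $E^i_k$ over the connected curve $C_i\setminus\{0\}$), but the paper then proves the two inclusions $\ker\iota^\ast\subseteq H^2_c(Y;\QQ)$ and $H^2_c(Y;\QQ)\subseteq\ker\iota^\ast$ directly: for the first it writes $H^2(Y;\QQ)=H^2_c(Y;\QQ)\oplus\bigoplus_{i,k}\QQ[E^i_k]^\ast$ and computes $\iota^\ast([E^i_k]^\ast)=\sum_{j}[E^i_{j,k}]^\ast\neq 0$, which uses the explicit structure of $H^2(Z;\QQ)$ from Proposition~\ref{prop:H2Z}, and only afterwards deduces the image statement for $\iota_\ast$ ``dually''. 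You instead establish only the two easy inclusions, $\bigoplus_{i,k}\QQ[E^i_k]\subseteq\operatorname{im}\iota_\ast$ and $H^2_c(Y;\QQ)\subseteq\ker\iota^\ast$, and force both to be equalities by a dimension count through the universal-coefficient duality of the two exact sequences; this avoids Proposition~\ref{prop:H2Z} altogether, at the price of needing the linear independence of the fibre classes $[E^i_k]$ in $H_2(Y;\QQ)$, i.e.\ their matching with the Ito--Reid generators for junior elements with $N_g=1$ --- which you rightly flag as the point requiring verification (it follows, for instance, by pairing the fibre classes against the divisor classes of the $E^i_{k'}$, which reproduces the nondegenerate $A_{n_i-1}$ Cartan matrix). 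One further point in your favour: for the inclusion $H^2_c(Y;\QQ)\subseteq\ker\iota^\ast$ the paper invokes a commutative diagram together with the claim $H^2_c(Z;\QQ)\cong H_4(Z;\QQ)^\ast=0$, which is dubious as literally stated, since Poincar\'e duality on the smooth open surface $Z$ gives $H^2_c(Z;\QQ)\cong H_2(Z;\QQ)\neq 0$; your support argument --- the $j_G$ compact divisors spanning $H_4(Y;\QQ)$ lie in $h^{-1}(0)$, which is disjoint from $Z$ because $0\notin W$, so their Poincar\'e-dual classes restrict to zero on $Z$ --- shows that the composite $H^2_c(Y;\QQ)\to H^2_c(Z;\QQ)\to H^2(Z;\QQ)$ vanishes even though $H^2_c(Z;\QQ)$ does not, and thus repairs as well as replaces the paper's step.
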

\begin{proof}
We have the decomposition 
\[
H^2(Y;\QQ)=H^2_c(Y;\QQ)\oplus \bigoplus_{i=1}^3\bigoplus_{k=1}^{n_i-1}\QQ [E^i_{k}]^\ast,
\]
where $\{[E^i_{k}]^\ast\}$ is the dual basis of $\{[E^i_{k}]\}$.
It follows from the construction of the cycle $[E^i_{j,k}]$ that for fixed $i, k$ we have $\iota_\ast([E^i_{j,k}])=[E^i_k]$ for all $j=1,\dots, \gamma_i$ and hence
$\iota^\ast([E^i_{k}]^\ast)=\displaystyle\sum_{j=1}^{\gamma_i}[E^i_{j,k}]^\ast$, which implies that ${\rm Ker}(\iota^\ast)\subset H^2_c(Y;\QQ)$. On the other hand, by the following commutative diagram
\[
\xymatrix{
H^2_c(Y;\QQ)\ar[r]\ar[d]_{\iota^\ast} & H^2(Y;\QQ)\ar[d]^{\iota^\ast}\\
H^2_c(Z;\QQ)\ar[r]& H^2(Z;\QQ)\\
},
\]
and by Poincar\'{e} duality $H^2_c(Z;\QQ)\cong H_4(Z;\QQ)^\ast=0$, we see that $H^2_c(Y;\QQ)\subset {\rm Ker}(\iota^\ast)$.
\end{proof}
In particular, we have a natural isomorphism 
\begin{equation}
H_4(Y,Z;\QQ)\cong H_4(Y;\QQ)\cong H^2_c(Y;\QQ)\cong H^2(Y,Z;\QQ)\cong H_2(Y,Z;\QQ)^\ast
\end{equation}
induced by Poincar\'{e} duality on $Y$. Therefore, we obtain
\begin{equation}
\dim_\QQ H_2(Y,Z;\QQ)=\dim_\QQ H_4(Y,Z;\QQ)=j_G.
\end{equation}
\begin{corollary}
We have 
\begin{equation}\label{eq:rank}
\dim_\QQ H_3(Y,Z;\QQ)=2+\sum_{i=1}^3n_i(\gamma_i-1).
\end{equation}
\end{corollary}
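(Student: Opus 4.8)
The plan is to read off the alternating sum of dimensions from the four-term exact sequence
\[
0 \longrightarrow H_3(Y,Z;\QQ) \longrightarrow H_2(Z;\QQ) \stackrel{\iota_\ast}{\longrightarrow} H_2(Y;\QQ) \longrightarrow H_2(Y,Z;\QQ) \longrightarrow 0,
\]
which gives
\[
\dim_\QQ H_3(Y,Z;\QQ) = \dim_\QQ H_2(Z;\QQ) + \dim_\QQ H_2(Y,Z;\QQ) - \dim_\QQ H_2(Y;\QQ).
\]
Here the last two terms are already available: $\dim_\QQ H_2(Y,Z;\QQ)=j_G$ was established above, and the Ito--Reid formula gives $\dim_\QQ H_2(Y;\QQ)=j_G+\sum_{i=1}^3(n_i-1)$. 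From Proposition~\ref{prop:H2Z}, together with $\dim_\QQ H_2(Z;\QQ)=\dim_\QQ H^2(Z;\QQ)$ (universal coefficients over $\QQ$), one reads off $\dim_\QQ H_2(Z;\QQ)=\dim_\QQ H^2(W;\QQ)+\sum_{i=1}^3\gamma_i(n_i-1)$. Substituting these, the whole problem reduces to the single computation $\dim_\QQ H^2(W;\QQ)=2+\sum_{i=1}^3(\gamma_i-1)$; then the elementary identity $(\gamma_i-1)+(\gamma_i-1)(n_i-1)=n_i(\gamma_i-1)$ collapses the sum to the asserted value $2+\sum_{i=1}^3 n_i(\gamma_i-1)$.

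The crux is therefore the rational cohomology of $W=V/G$. First I would invoke the transfer isomorphism $H^\ast(W;\QQ)\cong H^\ast(V;\QQ)^G$, valid for the quotient of a space by a finite group. Since $V$ is the Milnor fibre of an isolated hypersurface singularity in $\CC^3$, it is $1$-connected and homotopy equivalent to a wedge of $\mu'$ copies of $S^2$; hence $H^q(V;\QQ)=0$ for $q\neq 0,2$, and this vanishing passes to $G$-invariants, so $H^q(W;\QQ)=0$ for $q\neq 0,2$ and $H^0(W;\QQ)=\QQ$. Consequently $\dim_\QQ H^2(W;\QQ)=\chi(W)-1$, and it remains to compute the topological Euler characteristic of $W$.

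To compute $\chi(W)$ I would use $\chi(V/G)=\frac{1}{|G|}\sum_{g\in G}\chi(V^g)$, the Euler-characteristic form of the transfer, equivalently the Lefschetz fixed-point formula $\chi(V^g)=L(g)$ applied to each finite-order $g$ acting on the finite-CW-type $V$. The fixed loci are easy to list. For $g=\mathrm{id}$ one has $V^g=V$ with $\chi(V)=\mu'+1=\sum_i\gamma'_i$. For $g$ with $N_g=0$ the only fixed point is the origin, which does not lie on $V$, so $\chi(V^g)=0$. For $g\in K_i\setminus\{\mathrm{id}\}$ — exactly the non-identity elements with $N_g=1$, which fix the $x_i$-axis — one has $V^g=V\cap\{x_j=0,\ j\neq i\}$, the set of $\gamma'_i$ roots of $x_i^{\gamma'_i}=\eta$, so $\chi(V^g)=\gamma'_i$. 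Since the sets $K_i\setminus\{\mathrm{id}\}$ are disjoint (any common element would lie in $\mathrm{SL}(3,\CC)$ with two unit eigenvalues, hence be the identity) and each has $n_i-1$ elements, summation gives $\chi(W)=\frac{1}{|G|}\big(\sum_i\gamma'_i+\sum_i(n_i-1)\gamma'_i\big)=\frac{1}{|G|}\sum_i n_i\gamma'_i$. Using $n_i\gamma'_i=\gamma_i|G|$ (from $\gamma_i=\gamma'_i/|G/K_i|$ and $|G/K_i|=|G|/n_i$), this collapses to $\chi(W)=\sum_i\gamma_i$, whence $\dim_\QQ H^2(W;\QQ)=\sum_i\gamma_i-1=2+\sum_i(\gamma_i-1)$, as required.

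The main obstacle is exactly this Euler-characteristic computation for $W$: one must justify the transfer and Lefschetz formulas for the non-compact (but finite-CW-type) Milnor fibre $V$ under its finite group action, enumerate the fixed loci $V^g$ correctly according to $N_g$ and to which $K_i$ contains $g$, and keep careful track of the arithmetic identity $n_i\gamma'_i=\gamma_i|G|$ linking the upstairs data $\gamma'_i$ to the numbers $\gamma_i$ downstairs. Everything else is the routine bookkeeping of dimensions in the exact sequence.
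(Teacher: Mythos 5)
Your proposal is correct, and its outer skeleton coincides with the paper's: both extract $\dim_\QQ H_3(Y,Z;\QQ)=\dim_\QQ H_2(Z;\QQ)-\dim_\QQ H_2(Y;\QQ)+\dim_\QQ H_2(Y,Z;\QQ)$ from the four-term exact sequence and feed in Proposition~\ref{prop:H2Z}, the Ito--Reid dimensions, and $\dim_\QQ H_2(Y,Z;\QQ)=j_G$, so that everything reduces to the crux $\dim_\QQ H^2(W;\QQ)=2+\sum_{i=1}^3(\gamma_i-1)$. For that crux you take a genuinely different route. The paper writes $\dim_\QQ H^2(W;\QQ)=\dim_\QQ H^2(V;\QQ)^G$ and identifies $H^2(V;\CC)^G$ with the $G$-invariant part of the Jacobian ring $\CC\{x_1,x_2,x_3\}/(\frac{\p f}{\p x_1},\frac{\p f}{\p x_2},\frac{\p f}{\p x_3})$, in effect counting invariant monomials: $1$, the class of $x_1x_2x_3$, and $x_i^{|G/K_i|m}$ for $m=1,\dots,\gamma_i-1$ (the hypothesis $G\subset{\rm SL}(3,\CC)$ is what makes the determinant twist in this identification invisible). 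You instead argue purely topologically, via transfer and the equivariant Euler-characteristic formula $\chi(V/G)=\frac{1}{|G|}\sum_{g\in G}\chi(V^g)$, and your fixed-locus census is correct: $\chi(V)=\mu'+1=\sum_i\gamma'_i$; $\chi(V^g)=0$ when $N_g=0$ since the origin is off the fibre; $\chi(V^g)=\gamma'_i$ for $g\in K_i\setminus\{{\rm id}\}$ (these are exactly the $N_g=1$ elements, and the sets $K_i\setminus\{{\rm id}\}$ are pairwise disjoint, as you note); together with $n_i\gamma'_i=\gamma_i|G|$ this gives $\chi(W)=\sum_i\gamma_i$ and the required dimension. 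What each approach buys: yours avoids the singularity-theoretic input (the Milnor-algebra description of $H^2(V;\CC)$ as a $G$-module) at the cost of the one point you rightly flag, namely $L(g)=\chi(V^g)$ on the non-compact $V$; this is repaired by retracting $V$ $G$-equivariantly onto the compact manifold with boundary $f^{-1}(\eta)\cap\overline{B_\varepsilon(0)}$, or avoided altogether by stratifying $V$ by stabilizers and using additivity of $\chi$ (the free part has $\chi(V)-\sum_i\gamma'_i=0$, so $\chi(W)=\sum_i\gamma_i$ falls out at once). The paper's Jacobian-ring route is shorter given standard facts and has the side benefit of exhibiting the invariant cohomology classes explicitly, which fits the constructions in the rest of the paper.
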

\begin{proof}
Note that $\dim_\QQ H^2(W;\QQ)=\dim_\QQ H^2(V;\QQ)^G$ where $H^2(V;\QQ)^G$ denotes 
the $G$-invariant subspace of $H^2(V;\QQ)$.
Since $H^2(V;\CC)^G$ is isomorphic to the $G$-invariant subspace of the Jacobian ring 
$\CC\{x_1,x_2,x_3\}/(\frac{\p f}{\p x_1},\frac{\p f}{\p x_2}, \frac{\p f}{\p x_3})$ of $f$, we have 
$\dim_\QQ H^2(V;\QQ)^G=2+\displaystyle\sum_{i=1}^{3}(\gamma_i-1)$.
Note also that the number of elements $g\in G$ such that ${\rm age}(g)=1$ and $N_g=1$ is given by $\displaystyle\sum_{i=1}^{3}(n_i-1)$.
The equality \eqref{eq:rank} follows.
\end{proof}
\begin{remark}
Denote by $\CC^{N_g}$ the fixed locus of $g\in G$. 
For $p\in\ZZ$, set 
\begin{equation}
H^p_G(B_\varepsilon(0),V;\QQ):=\bigoplus_{g\in G}{H}^{p+2 age(g)}(B_\varepsilon(0)\cap \CC^{N_g},V\cap \CC^{N_g};\QQ)^G,
\end{equation}
\begin{equation}
\widetilde{H}^{p-1}_G(V;\QQ):=\bigoplus_{g\in G}\widetilde{H}^{p+2 age(g)-1}(V\cap \CC^{N_g};\QQ)^G,
\end{equation}
where $(-)^G$ denotes the $G$-invariant subspace and $\widetilde{H}$ denotes the reduced cohomology.
Then by Example~5.9 in \cite{ET} we have isomorphisms of $\QQ$-vector spaces 
\begin{equation}
H^p(Y,Z;\QQ)\cong H^p_G(B_\varepsilon(0),V;\QQ)\cong \widetilde{H}^{p-1}_G(V;\QQ),\quad p\in\ZZ.
\end{equation}
\end{remark}
\section{Coxeter-Dynkin diagrams}
Let $\langle -, -\rangle_V:H_2(V;\ZZ)\times H_2(V;\ZZ)\longrightarrow \ZZ$ be the intersection form. 
By \cite{Ga} the singularity $f(x,y,z)$ has a distinguished basis of vanishing cycles 
\[ \{ \delta_1; \delta_1^1, \delta_2^1, \ldots, \delta_{\gamma'_1-1}^1; \delta_1^2, \delta_2^2, \ldots, \delta_{\gamma'_2-1}^2; \delta_1^3, \delta_2^3, \ldots, \delta_{\gamma'_3-1}^3;\delta_{\mu'} \} \]
with a Coxeter-Dynkin diagram shown in Fig.~\ref{FigTpqr+}. Here the double dashed edge means intersection number $-2$.
\begin{figure}
$$
\xymatrix{ 
 & & & *{\bullet} \ar@{==}[d] \ar@{-}[dr]  \ar@{-}[ldd] \ar@{}^{\delta_{\mu'}}[r] 
 & & &  \\
 *{\bullet} \ar@{-}[r] \ar@{}_{\delta^2_{\gamma'_2-1}}[d]  & {\cdots} \ar@{-}[r]  & *{\bullet} \ar@{-}[r] \ar@{-}[ur]   \ar@{}_{\delta^2_1}[d] & *{\bullet} \ar@{-}[dl] \ar@{-}[r] \ar@{}^{\delta_1}[d] & *{\bullet} \ar@{-}[r]  \ar@{}^{\delta^3_1}[d]  & {\cdots} \ar@{-}[r]  &*{\bullet} \ar@{}^{\delta^3_{\gamma'_3-1}}[d]   \\
& &  *{\bullet} \ar@{-}[dl] \ar@{}_{\delta^1_1}[r]  & & & &  \\
 & {\cdots} \ar@{-}[dl] & & & & & \\
*{\bullet}  \ar@{}_{\delta^1_{\gamma^*_1-1}}[r] & & & & & &
  }
$$
\caption{Coxeter-Dynkin diagram of $f(x_1,x_2,x_3):= x_1^{\gamma'_1} + x_2^{\gamma'_2} + x_3^{\gamma'_3} - cx_1x_2x_3$} \label{FigTpqr+}
\end{figure}
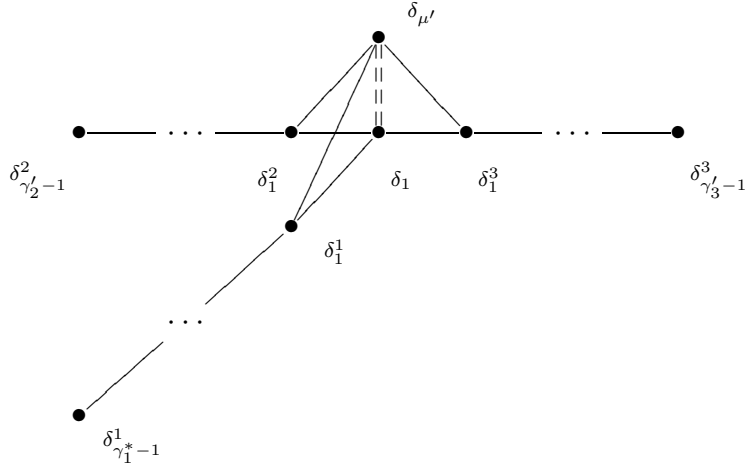

The natural map
\begin{equation}\label{Leray isom}
H_2(V;\ZZ)\longrightarrow H_3(B_\varepsilon(0) \setminus V; \ZZ)
\end{equation}
is an isomorphism since we have $H_3(B_\varepsilon(0);\ZZ)=H_4(B_\varepsilon(0);\ZZ)=0$.
Consider
\[ T^3 = \{ x \in B_\varepsilon(0) \, | \, |x_1|=|x_2|=|x_3|=\eta \} \subset B_\varepsilon(0). \]
Let $\delta_0 \in H_2(V, \ZZ)$ be the image of the class $[T^3] \in H_3(B_\varepsilon(0) \setminus V; \ZZ)$ under 
the isomorphism~\eqref{Leray isom}.
The radical of the lattice $H_2(V; \ZZ)$ is of rank one because of the condition~\eqref{1.1} 
and is generated by $\delta_0$ since $\delta_0$ is invariant under the Milnor monodromy.
We may assume that $\delta_0 = \delta_{\mu'}-\delta_1$ by redefinition of $\delta_{\mu'}$ if necessary. 
The lattice $H_2(V; \ZZ)/\langle \delta_0 \rangle$ has a basis of vanishing cycles
\[ {\mathcal B} := \{ \delta_1; \delta_1^1, \delta_2^1, \ldots, \delta_{\gamma'_1-1}^1; \delta_1^2, \delta_2^2, \ldots, \delta_{\gamma'_2-1}^2; \delta_1^3, \delta_2^3, \ldots, \delta_{\gamma'_3-1}^3 \} \]
intersecting with a Coxeter-Dynkin diagram shown in Fig.~\ref{FigTpqr}.

\begin{figure}
$$
\xymatrix{ 
 *{\bullet}  \ar@{-}[r] \ar@{}_{\delta_{\gamma'_2-1}^2}[d]  & {\cdots} \ar@{-}[r]  &  *{\bullet} \ar@{-}[r]    \ar@{}_{\delta_1^2}[d] &  *{\bullet} \ar@{-}[dl] \ar@{-}[r] \ar@{}_{\delta_1}[dr] &  *{\bullet} \ar@{-}[r]  \ar@{}^{\delta_1^3}[d]  & {\cdots} \ar@{-}[r]  & *{\bullet} \ar@{}^{\delta_{\gamma'_3-1}^3}[d]   \\
& &   *{\bullet} \ar@{-}[dl] \ar@{}_{\delta_1^1}[r]  & & & &  \\
 & {\cdots} \ar@{-}[dl] & & & & & \\
 *{\bullet}  \ar@{}_{\delta_{\gamma'_1-1}^1}[r] & & & & & &
  }
$$
\caption{Coxeter-Dynkin diagram corresponding to ${\mathcal B}$}  \label{FigTpqr}
\end{figure}

We consider the action of $G$ on $H_2(V; \ZZ)$.
Let $g = \left( \frac{a_1}{\gamma'_1}, \frac{a_2}{\gamma'_2}, \frac{a_3}{\gamma'_3} \right)$ be an element of $G$.  For $i=1,2,3$ and $j \in \ZZ$ define
\begin{eqnarray*} 
\delta_{\gamma'_i}^i & := & \delta_0 - \sum_{k=1}^{\gamma'_i-1} \delta_k^i, \\
\delta_j^i & := & \delta_{j'}^i \quad \mbox{if } j-j' \equiv 0 \, \mbox{mod} \, \gamma'_i.
\end{eqnarray*}

\begin{lemma} \label{lemGaction}
We have
\begin{itemize}
\item[{\rm (i)}] $g(\delta_0) = \delta_0$.
\item[{\rm (ii}]  $g(\delta_j^i) = \delta_{j+a_i}^i$ for $i=1,2,3$ and $j \in \ZZ$.
\item[{\rm (iii)}] $g(\delta_1) = \delta_1 + \sum_{i=1}^3 \sum_{j=1}^{a_i} \delta_j^i - \mbox{age}(g) \delta_0$.
\end{itemize}
\end{lemma}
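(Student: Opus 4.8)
The plan is to realize the $G$-action as that induced by the diagonal holomorphic automorphism $g$ of the pair $(B_\varepsilon(0),V)$, so that $g$ acts as an isometry of the lattice $(H_2(V;\ZZ),\langle-,-\rangle_V)$ fixing the radical generator $\delta_0$. Part (i) is then immediate: $g$ preserves the torus $T^3=\{|x_1|=|x_2|=|x_3|=\eta\}$ setwise, acting on each circle factor $|x_i|=\eta$ by the rotation $x_i\mapsto e^{2\pi i a_i/\gamma'_i}x_i$. This map is orientation-preserving and homotopic to the identity on $T^3=(S^1)^3$, so $g_\ast[T^3]=[T^3]$, and since $g$ preserves $V$ the isomorphism \eqref{Leray isom} is $g$-equivariant; hence $g(\delta_0)=\delta_0$.

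For part (ii) I would use that the arm cycles $\delta_1^i,\dots,\delta_{\gamma'_i-1}^i$ form the $A_{\gamma'_i-1}$-configuration of vanishing cycles localized near the $x_i$-axis, along which $f$ restricts to $x_i^{\gamma'_i}$ and the Milnor fibre meets the axis in the $\gamma'_i$ points $\{x_i^{\gamma'_i}=\eta\}$. The rotation $x_i\mapsto e^{2\pi i a_i/\gamma'_i}x_i$ permutes these $\gamma'_i$ sheets cyclically by $a_i$ and therefore shifts the chain by $a_i$; the closure convention $\delta_{\gamma'_i}^i=\delta_0-\sum_{k=1}^{\gamma'_i-1}\delta_k^i$ together with the periodicity $\delta_j^i=\delta_{j'}^i$ for $j\equiv j'\ (\mathrm{mod}\ \gamma'_i)$ records exactly the wrap-around forced by $g(\delta_0)=\delta_0$, giving $g(\delta_j^i)=\delta_{j+a_i}^i$.

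For part (iii) I would separate the arm part from the $\delta_0$-part. Because $g$ is an isometry, (ii) fixes all intersection numbers $\langle g(\delta_1),\delta_m^i\rangle=\langle\delta_1,\delta_{m-a_i}^i\rangle$; a direct computation in the affine cycle (where $\delta_1$ meets $\delta_1^i$ with multiplicity $1$ and $\delta_{\gamma'_i}^i$ with multiplicity $-1$, since $\langle\delta_1,\delta_{\gamma'_i}^i\rangle=-\langle\delta_1,\delta_1^i\rangle=-1$) shows these coincide with the intersection numbers of $\delta_1+\sum_{i=1}^3\sum_{j=1}^{a_i}\delta_j^i$. As the form on $H_2(V;\ZZ)/\langle\delta_0\rangle$ is nondegenerate, this pins down $g(\delta_1)$ modulo $\delta_0$, so that $g(\delta_1)=\delta_1+\sum_{i=1}^3\sum_{j=1}^{a_i}\delta_j^i+c(g)\delta_0$ for some $c(g)\in\ZZ$.

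It then remains to identify $c(g)=-{\rm age}(g)$, and the clean device here is that $g\mapsto g_\ast$ is a representation. Writing $S_i(a)=\sum_{j=1}^a\delta_j^i$ one has $S_i(\gamma'_i)=\delta_0$, so $S_i(a_i+b_i)=S_i(c_i)+\epsilon_i\delta_0$ where $c_i=(a_i+b_i)\bmod\gamma'_i$ and $a_i+b_i=c_i+\epsilon_i\gamma'_i$ with $\epsilon_i\in\{0,1\}$; using (ii) to evaluate $g_\ast(S_i(b_i))=S_i(a_i+b_i)-S_i(a_i)$ and composing $g_\ast h_\ast=(gh)_\ast$ yields $c(gh)=c(g)+c(h)+\sum_i\epsilon_i$. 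Since ${\rm age}(gh)={\rm age}(g)+{\rm age}(h)-\sum_i\epsilon_i$, the function $c+{\rm age}$ is a homomorphism $G\to\ZZ$, which must vanish as $G$ is finite, forcing $c(g)=-{\rm age}(g)$. I expect the genuinely geometric input---the localization of the arm cycles near the coordinate axes in (ii), and the selection via the geometric rotation of the correct cycle among those carrying the prescribed intersection numbers in the arm part of (iii)---to be the delicate step, whereas the appearance of ${\rm age}(g)$ is then forced formally by this cocycle computation.
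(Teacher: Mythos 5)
Your parts (i) and (ii) are sound and essentially at the paper's level of rigor: the paper proves (ii) by invoking Gabrielov's theorem to identify the arm $\{\delta^3_1,\dots,\delta^3_{\gamma'_3-1}\}$ with a distinguished $A_{\gamma'_3-1}$-basis and then declaring the cyclic shift evident, which is the same picture as your localization near the $x_i$-axis. The genuine gap is in (iii), at the sentence ``As the form on $H_2(V;\ZZ)/\langle\delta_0\rangle$ is nondegenerate, this pins down $g(\delta_1)$ modulo $\delta_0$.'' You match the pairings of $g(\delta_1)$ only against the arm vectors $\delta^i_m$, which span a subspace $U\cong \bigoplus_{i=1}^3 A_{\gamma'_i-1}$ of rank $\sum_i(\gamma'_i-1)$, one less than the rank $1+\sum_i(\gamma'_i-1)$ of the quotient. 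Since $U$ is nondegenerate, its orthogonal complement in the quotient is one-dimensional, spanned by a vector $w=\delta_1+u$ with $u\in U\otimes\QQ$ the unique solution of $\langle w,\delta^i_m\rangle_V=0$ for all $i,m$; hence your linear conditions determine $g(\delta_1)$ only up to $\QQ w$, i.e.\ the coefficient $\xi_1$ of $\delta_1$ is left free. The missing datum is $\langle g(\delta_1),\delta_1\rangle_V$, which (ii) cannot supply, since it amounts to knowledge of $g^{-1}(\delta_1)$, the very thing being computed. Worse, adding the norm condition $\langle g(\delta_1),g(\delta_1)\rangle_V=-2$ does not close the gap: setting $v_0=\delta_1+\sum_{i=1}^3\sum_{j=1}^{a_i}\delta^i_j$, one checks $\langle v_0,w\rangle_V=\langle\delta_1,w\rangle_V=\langle w,w\rangle_V$, so $v_0-2w$ has the same norm and the same pairings with $U$ as $v_0$. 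To exclude it you need a further input, e.g.\ integrality (the arm components of $w$ are fundamental coweights of $A_{\gamma'_i-1}$, with denominators $\gamma'_i$, so $v_0-2w$ is generically not in $H_2(V;\ZZ)$), or the observation that $g$ stabilizes the span of the $\delta^i_j$ and $\delta_0$, hence acts on the one-dimensional quotient by $\xi_1=\pm1$ by finite order, after which $-1$ must be ruled out.

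For the $\delta_0$-coefficient your route genuinely differs from the paper, and is arguably a necessary supplement to it. The paper's proof of (iii) says to solve the isometry equations $\langle g(\delta_1),g(\delta^i_j)\rangle_V=\langle\delta_1,\delta^i_j\rangle_V$ for the unknown coefficients; but $\delta_0$ generates the radical, so no intersection-number equation can detect the $\delta_0$-component at all (and, as above, that linear system is itself rank-deficient by one). Your cocycle computation --- $c(gh)=c(g)+c(h)+\sum_i\epsilon_i$ together with $\mathrm{age}(gh)=\mathrm{age}(g)+\mathrm{age}(h)-\sum_i\epsilon_i$, so that $c+\mathrm{age}$ is a homomorphism from the finite group $G$ to $\QQ$ and therefore vanishes --- is correct and cleanly forces $c(g)=-\mathrm{age}(g)$, using only (i), (ii), the relation $\sum_{j=1}^{\gamma'_i}\delta^i_j=\delta_0$, and the arm part of (iii) modulo $\delta_0$. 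So the age term in your argument is fine; the proof stands or falls with repairing the pinning-down step above.
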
 

\begin{proof} (i) is clear by the construction.

(ii) For symmetry reasons, it suffices to prove the claim for $i=3$. We apply \cite[Theorem~1]{Ga} to the function $F_{\gamma_3'-1}(x,y,z)=f(x,y,z)$. By this theorem, the intersection matrix of $\{ \delta_1^3, \delta_2^3, \ldots, \delta_{\gamma'_3-1}^3 \}$ coincides with an intersection matrix for the singularity 
\[f'(x,y,z)  = x^{\gamma'_1} + y^{\gamma'_2} -c\beta xy+z^{\gamma'_3} \quad \mbox{for } \beta \neq 0.\]
This is a singularity of type $A_{\gamma'_3-1}$ and $\{ \delta_1^3, \delta_2^3, \ldots, \delta_{\gamma'_3-1}^3 \}$ is a distinguished basis of vanishing cycles for this singularity with the standard Coxeter-Dynkin diagram. It is easy to see that the action of $g$ on this basis is as claimed.

(iii) We set 
\[ g(\delta_1) = \xi_1 \delta_1 + \sum_{i=1}^3 \sum_{j=1}^{\gamma_i'-1} \xi^i_j \delta_j^i \]
and solve the equations
\[ \langle g(\delta_1), g(\delta_j^i) \rangle_V = \langle \delta_1 , \delta_j^i \rangle_V \]
for the unknown coefficients $\xi_1$, $\xi_j^i$ using (ii) to obtain (iii).
\end{proof}

Now we shall consider the homology group $H_2(W; \QQ)$. 
For $\delta\in H_2(V;\ZZ)$, we set $\overline{\delta}:=(\pi|_V)_*(\delta)$ where $\pi|_V$ is the natural projection $\pi|_V:V\longrightarrow W$.
Since Poincar\'{e} duality holds on $W$ over $\QQ$ since 
$W=V/G$ is a quotient of a smooth manifold $V$ by a finite group $G$
we have the intersection form $\langle -, -\rangle_W:H_2(W; \QQ)\times H_2(W; \QQ)\longrightarrow \QQ$, which satisfies 
\[ \langle \overline{\delta}, \overline{\delta}' \rangle_W = \sum_{g \in G} \langle \delta, g(\delta') \rangle_V,
\quad \delta,\delta'\in H_2(V;\ZZ). \]
Clearly $\overline{\delta}_0$ lies in the radical of $H_2(W; \QQ)$. 
Then $H_2(W; \QQ)/\langle \overline{\delta}_0 \rangle$ has a $\QQ$-basis $\overline{\mathcal B}$ where 
\[ \overline{\mathcal B} := \{\overline{\delta}_1;  \overline{\delta}_1^1, \overline{\delta}_2^1, \ldots, \overline{\delta}_{\gamma_1-1}^1; \overline{\delta}_1^2, \overline{\delta}_2^2, \ldots, \overline{\delta}_{\gamma_2-1}^2; \overline{\delta}_1^3, \overline{\delta}_2^3, \ldots, \overline{\delta}_{\gamma_3-1}^3 \}.\]

\begin{lemma} \label{lembar}
We have
\begin{eqnarray}
\langle \overline{\delta}_1 , \overline{\delta}_1 \rangle_W  & =  & 2 j_G -2, \label{eqbar1} \\
\langle \overline{\delta}_j^i , \overline{\delta}_j^i \rangle_W & = & -2 n_i  \mbox{ for } i=1,2,3 \mbox{ and } j=1, \ldots, \gamma_i -1,  \label{eqbar2} \\
\langle \overline{\delta}_1 , \overline{\delta}_1^i \rangle_W & =  & \langle \overline{\delta}_j^i , \overline{\delta}_{j+1}^i \rangle_W = n_i  \mbox{ for } i=1,2,3 \mbox{ and } j=1, \ldots, \gamma_i -2,
\label{eqbar3}
\end{eqnarray}
and $\langle \overline{\delta}_j^i , \overline{\delta}_{j'}^{i'} \rangle_W  =  0$ otherwise.
\end{lemma}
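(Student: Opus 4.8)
The plan is to evaluate every entry of the intersection matrix directly from the averaging formula
\[ \langle \overline{\delta}, \overline{\delta}' \rangle_W = \sum_{g \in G} \langle \delta, g(\delta') \rangle_V \]
by substituting the explicit group action of Lemma~\ref{lemGaction} together with the intersection numbers read off Fig.~\ref{FigTpqr}. Recall from that diagram that $\langle \delta_j^i, \delta_j^i\rangle_V = \langle \delta_1, \delta_1\rangle_V = -2$, that $\delta_1$ meets only $\delta_1^1, \delta_1^2, \delta_1^3$ (each in $+1$), and that consecutive cycles within one arm meet in $+1$. First I would record the effect of the cyclic conventions $\delta_{\gamma'_i}^i = \delta_0 - \sum_{k=1}^{\gamma'_i-1}\delta_k^i$ and $\delta_j^i = \delta_{j'}^i$ for $j \equiv j' \bmod \gamma'_i$: since $\delta_0$ lies in the radical, a short computation shows that $\delta_1^i, \ldots, \delta_{\gamma'_i}^i$ form an affine cycle of type $\widetilde{A}_{\gamma'_i-1}$ (self-intersection $-2$, adjacent intersection $+1$, indices read modulo $\gamma'_i$), and that $\langle \delta_1, \delta_k^i\rangle_V$ equals $+1$ for $k\equiv 1$, $-1$ for $k \equiv 0$, and $0$ otherwise modulo $\gamma'_i$. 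By Lemma~\ref{lemGaction}(ii), $g$ acts on this affine cycle by the shift $j \mapsto j + a_i$.

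The second ingredient is the counting attached to the homomorphism $\phi_i : G \to \ZZ/\gamma'_i\ZZ$, $g \mapsto a_i \bmod \gamma'_i$. Its kernel is exactly $K_i$, so its image is the cyclic subgroup of order $|G/K_i| = \gamma'_i/\gamma_i$, that is, the set of multiples of $\gamma_i$, and each such value is attained by precisely $n_i$ elements. Hence in $\sum_{g}\langle \delta_j^i, \delta_{j'+a_i}^i\rangle_V$ only those $g$ with $j'+a_i$ congruent to $j$ or $j\pm 1$ modulo $\gamma'_i$ contribute; and since $a_i$ is always a multiple of $\gamma_i$, reducing such a congruence modulo $\gamma_i$ forces the relevant index difference to vanish modulo $\gamma_i$. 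For the index ranges appearing in \eqref{eqbar2} and \eqref{eqbar3} (which force $\gamma_i \geq 2$, resp.\ $\gamma_i \geq 3$) this can only happen for $a_i \equiv 0$, i.e.\ for the $n_i$ elements of $K_i$; this at once yields $\langle \overline{\delta}_j^i, \overline{\delta}_j^i\rangle_W = -2n_i$, $\langle \overline{\delta}_j^i, \overline{\delta}_{j+1}^i\rangle_W = \langle \overline{\delta}_1, \overline{\delta}_1^i\rangle_W = n_i$, and $0$ for every same-arm pair at distance $\geq 2$ as well as for every $\langle \overline{\delta}_1, \overline{\delta}_{j}^i\rangle_W$ with $j \geq 2$. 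The cross-arm vanishing $\langle \overline{\delta}_j^i, \overline{\delta}_{j'}^{i'}\rangle_W = 0$ for $i \neq i'$ is automatic, since cycles in distinct arms are orthogonal in $H_2(V;\ZZ)$ even after expanding $\delta_{\gamma'_{i'}}^{i'}$.

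The remaining and most delicate entry is \eqref{eqbar1}, for which I would use Lemma~\ref{lemGaction}(iii). Because $\langle \delta_1, \delta_0\rangle_V = 0$ and $\langle \delta_1, \delta_j^i\rangle_V$ is nonzero only for $j=1$ in the range $1 \le j \le \gamma'_i-1$, the double sum collapses to
\[ \langle \delta_1, g(\delta_1)\rangle_V = -2 + \#\{i : a_i \neq 0\} = 1 - N_g, \]
where $N_g = \#\{i : a_i = 0\}$ is the dimension of the fixed locus. Summing over $G$ gives $\langle \overline{\delta}_1, \overline{\delta}_1\rangle_W = |G| - \sum_{g\in G} N_g$. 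The main obstacle is then the evaluation of $\sum_{g} N_g$, and this is where the hypothesis $G \subset {\rm SL}(3,\CC)$ enters decisively: no nontrivial $g$ can fix a coordinate plane, since two vanishing exponents would force the third to vanish as well, so every element has $N_g \in \{0,1,3\}$, with $N_g = 3$ only for the identity and $N_g = 1$ for exactly the $\sum_i (n_i-1)$ nontrivial elements of the subgroups $K_i$. Therefore $\sum_g N_g = 3 + \sum_{i=1}^3(n_i - 1)$, and substituting Proposition~\ref{Prop|G|} yields $\langle \overline{\delta}_1, \overline{\delta}_1\rangle_W = |G| - 3 - \sum_i(n_i-1) = 2j_G - 2$, as claimed. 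It is worth noting that the age term in (iii) drops out precisely because $\delta_0$ is isotropic, so no further information about $j_G$ is needed beyond Proposition~\ref{Prop|G|} itself.
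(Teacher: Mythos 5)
Your proposal is correct and takes essentially the same route as the paper: both expand $\langle \overline{\delta}, \overline{\delta}' \rangle_W = \sum_{g \in G} \langle \delta, g(\delta') \rangle_V$ via Lemma~\ref{lemGaction}, note that a shift $a_i \equiv 0, \pm 1 \pmod{\gamma'_i}$ other than $a_i=0$ is impossible once $\gamma_i \geq 2$ (so only the $n_i$ elements of $K_i$ contribute to the arm entries), and evaluate $\langle \overline{\delta}_1, \overline{\delta}_1 \rangle_W$ by classifying group elements according to which coordinates they fix, concluding with Proposition~\ref{Prop|G|}. Your bookkeeping $|G| - \sum_{g} N_g$ is just a rearrangement of the paper's $\langle \delta_1,\delta_1\rangle_V |G| + \sum_{i=1}^3 n_i\left(|G/K_i|-1\right)$, so the two computations coincide.
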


\begin{proof} This follows from the definition of $\langle -, - \rangle_W$ and Lemma~\ref{lemGaction}. We have 
\[ \langle \overline{\delta}_1 , \overline{\delta}_1 \rangle_W  = \langle \delta_1, \delta_1 \rangle_V |G| + \sum_{i=1}^3 n_i (|G/K_i|-1) = 2 j_G -2, \]
where the last equality follows from Proposition~\ref{Prop|G|}. This proves (\ref{eqbar1}). To prove (\ref{eqbar2}), note that $g(\delta_j^i) \neq \delta_{j+1}^i$ unless $|G/K_i|=\gamma_i'$ and hence $\gamma_i=1$. Therefore, if $\gamma_i > 1$ and $j=1, \ldots, \gamma_i -1$,
\[ \langle \overline{\delta}_j^i , \overline{\delta}_j^i \rangle_W = \sum_{g \in K_i} \langle \delta_j^i, \delta_j^i \rangle_V = -2 n_i. \]
Similarly one can prove (\ref{eqbar3}).
\end{proof}

The Coxeter-Dynkin diagram corresponding to $\overline{\mathcal B}$  is shown in Fig.~\ref{FigbarTpqr}. Here, in the circles representing the vertices, the self-intersection numbers of the corresponding cycles are indicated.

\begin{figure}
$$
\xymatrix{ 
 *+=[o][F-:<3pt>]\txt{\ $\scriptstyle{-2n_2}$\, \,}  \ar@{-}[r]^{n_2} \ar@{}_{\overline{\delta}_{\gamma_2-1}^2}[d]  & {\cdots} \ar@{-}[r]^{n_2}  &  *+=[o][F-:<3pt>]\txt{\ $\scriptstyle{-2n_2}$\,\, } \ar@{-}[r]^{n_2}    \ar@{}_{\overline{\delta}_1^2}[d] &  *+=[o][F-:<3pt>]\txt{\ $\scriptstyle{}2j_G-2$\,\, } \ar@{-}[dl]^{n_1} \ar@{-}[r]^{n_3} \ar@{}_{\overline{\delta}_1}[dr] &  *+=[o][F-:<3pt>]\txt{\ $\scriptstyle{-2n_3}$\,\, } \ar@{-}[r]^{n_3}  \ar@{}^{\overline{\delta}_1^3}[d]  & {\cdots} \ar@{-}[r]^{n_3}  & *+=[o][F-:<3pt>]\txt{\ $\scriptstyle{-2n_3}$\,\,} \ar@{}^{\overline{\delta}_{\gamma_3-1}^3}[d]   \\
& &   *+=[o][F-:<3pt>]\txt{\ $\scriptstyle{-2n_1}$\,\, } \ar@{-}[dl]^{n_1} \ar@{}_{\overline{\delta}_1^1}[r]  & & & &  \\
 & {\cdots} \ar@{-}[dl]^{n_1} & & & & & \\
 *+=[o][F-:<3pt>]\txt{\ $\scriptstyle{-2n_1}$\,\, }  \ar@{}_{\overline{\delta}_{\gamma_1-1}^1}[r] & & & & & &
  }
$$
\caption{Coxeter-Dynkin diagram corresponding to $\overline{\mathcal B}$}  \label{FigbarTpqr}
\end{figure}

We now want to consider the homology group $H_3(Y,Z; \QQ)$. 
Let $\langle -, -\rangle_Z:H_2(Z; \ZZ)\times H_2(Z; \ZZ)\longrightarrow \ZZ$ be the intersection form on $Z$. 
Consider the homology classes
\[[E^i_{j,k}]  \in H_2(Z; \ZZ), \quad  i=1,2,3; \ j=1, \ldots, \gamma_i; \ k=1, \ldots n_i-1, \]
of irreducible components of exceptional divisors defined in Proposition~\ref{prop:H2Z}. 
Fix $i=1,2,3$ with $n_i>1$ and $j=1, \ldots, \gamma_i$.
Then the  elements $[E^i_{j,1}], \ldots , [E^i_{j,n_i-1}]$ are simple roots of the root system $A_{n_i-1}$ (with the bilinear form multiplied by $-1$) with the standard Coxeter-Dynkin diagram shown in Fig~\ref{FigAn}. Let
\[ \Lambda^i_{j,1} := \frac{1}{n_i} \left( (n_i-1)[E^i_{j,1}] + (n_i-2) [E^i_{j,2}] + \cdots + [E^i_{j,n_i-1}] \right) \]
be the first fundamental weight of the root system $A_{n_i-1}$, which is an element of $H_2(Z; \QQ)$. 
Let $w^i_{j,k}$ denote the reflection corresponding to the root $[E^i_{j,k}]$. Define
\[ \lambda^i_{j,k} := w^i_{j,k} \cdots w^i_{j,1} (\Lambda^i_{j,1}) \quad \mbox{for } i=1,2,3, \ j=1, \ldots, \gamma_i, \
k=0, \ldots , n_i-1. \] 

\begin{lemma} \label{lemAn}
We have for $i=1,2,3$ with $n_i>1$  and $j=1, \ldots , \gamma_i$
\begin{eqnarray}
\langle \lambda^i_{j,k}, \lambda^i_{j,k} \rangle_Z  & = &  - \frac{n_i -1}{n_i} \quad \mbox{for } k=0, \ldots , n_i-1, \\
\langle \lambda^i_{j,k}, \lambda^i_{j,k'} \rangle_Z  & = &  \frac{1}{n_i} \quad \mbox{for }  0 \leq k,k' \leq n_i-1, k \neq k'. \label{eqkk'}
\end{eqnarray}
\end{lemma}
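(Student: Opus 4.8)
The plan is to reduce everything to a concrete model of the root system $A_{n_i-1}$ and to recognize the vectors $\lambda^i_{j,k}$ as the weights of its defining representation. Throughout I would fix $i$ and $j$ and abbreviate $n:=n_i$ and $\alpha_k := [E^i_{j,k}]$. By Proposition~\ref{prop:H2Z} and Fig.~\ref{FigAn} the positive-definite form $(\cdot,\cdot):=-\langle\cdot,\cdot\rangle_Z$ makes $\alpha_1,\dots,\alpha_{n-1}$ the simple roots of a standard $A_{n-1}$ system, with $(\alpha_k,\alpha_k)=2$, $(\alpha_k,\alpha_{k+1})=-1$ and all other products zero. I would realize this system inside the hyperplane $\{x\in\RR^n\mid \sum_\ell x_\ell =0\}$ by setting $\alpha_k = e_k - e_{k+1}$, so that the Weyl group is the symmetric group $S_n$ permuting coordinates and the reflection $w^i_{j,k}$ is the transposition interchanging $e_k$ and $e_{k+1}$. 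Checking the defining property $(\Lambda^i_{j,1},\alpha_k)=\delta_{1k}$ then identifies the first fundamental weight as $\Lambda^i_{j,1} = e_1 - \tfrac1n \mathbf{1}$, where $\mathbf{1}=e_1+\dots+e_n$.

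The central step is to compute the images $\lambda^i_{j,k}= w^i_{j,k}\cdots w^i_{j,1}(\Lambda^i_{j,1})$ explicitly. Since $\mathbf{1}$ is fixed by every permutation, it is enough to track $e_1$, and an immediate induction gives $w^i_{j,k}\cdots w^i_{j,1}(e_1)=e_{k+1}$, whence
\[ \lambda^i_{j,k} = e_{k+1}-\tfrac1n\mathbf{1},\qquad k=0,1,\dots,n-1. \]
Thus the $\lambda^i_{j,k}$ are exactly the $n$ weights of the standard representation of $\mathfrak{sl}_n$, i.e.\ the vertices of a regular simplex centred at the origin, which makes both asserted formulas transparent.

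From this description the two identities are a direct calculation: using $(e_a,e_b)=\delta_{ab}$, $(e_a,\mathbf{1})=1$ and $(\mathbf{1},\mathbf{1})=n$ one finds $(\lambda^i_{j,k},\lambda^i_{j,k})=1-\tfrac1n=\tfrac{n-1}{n}$ and, for $k\neq k'$, $(\lambda^i_{j,k},\lambda^i_{j,k'})=-\tfrac1n$; negating to pass back to $\langle\cdot,\cdot\rangle_Z$ yields the two stated equalities. The only points requiring care are the bookkeeping of the successive reflections and the identification of $\Lambda^i_{j,1}$ in the model; beyond these the argument is routine. An alternative that sidesteps the explicit model for the first identity: because each $w^i_{j,k}$ preserves $\langle\cdot,\cdot\rangle_Z$, every $\lambda^i_{j,k}$ has the same self-intersection as $\Lambda^i_{j,1}$, whose norm $-\tfrac{n-1}{n}$ is the standard value of the first fundamental weight of $A_{n-1}$.
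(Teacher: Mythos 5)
Your proof is correct, but it takes a genuinely different route from the paper. The paper argues intrinsically, never choosing a model: the first identity follows there (as in your closing remark) from the $\langle-,-\rangle_Z$-invariance of the reflections, and the off-diagonal identity is reduced to the case $k'=k+1$ using that $w^i_{j,k}$ and $w^i_{j,k'}$ commute for $|k-k'|>1$ and that $w^i_{j,k}(\Lambda^i_{j,1})=\Lambda^i_{j,1}$ for $k\geq 2$, and then proved by induction on $k$ via the braid relation $w_kw_{k+1}w_k=w_{k+1}w_kw_{k+1}$, with base case $\langle \Lambda_1, w_1(\Lambda_1)\rangle_Z=-\frac{n_i-1}{n_i}+1=\frac{1}{n_i}$. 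You instead realize the root system concretely as $\alpha_k=e_k-e_{k+1}$ in the sum-zero hyperplane of $\RR^{n}$ and compute $\lambda^i_{j,k}=e_{k+1}-\frac{1}{n}\mathbf{1}$ outright; your identification of $\Lambda^i_{j,1}$ with $e_1-\frac{1}{n}\mathbf{1}$ is legitimate either by uniqueness of the element of the root span with pairings $(\Lambda_1,\alpha_k)=\delta_{1k}$ under the nondegenerate restricted form, or by expanding the paper's explicit formula $\frac{1}{n}\sum_{k=1}^{n-1}(n-k)\alpha_k$ in coordinates, and your bookkeeping $w_k\cdots w_1(e_1)=e_{k+1}$ is correct since $w_k$ is the transposition $(k,k+1)$. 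What your approach buys is transparency and uniformity: the $\lambda^i_{j,k}$ are visibly the $n$ weights of the standard representation of $\mathfrak{sl}_n$ (the vertices of a regular simplex), so \emph{all} pairs $k\neq k'$ are handled at once by a single two-line computation with $(e_a,e_b)=\delta_{ab}$, with no reduction to adjacent indices and no induction. What the paper's approach buys is independence of any coordinate realization — it uses only the Weyl-group relations and so works verbatim in $H_2(Z;\QQ)$ without invoking an abstract isomorphism of lattices, though at the cost of a slightly fiddlier induction. One cosmetic caution: keep the sign conventions straight, since the paper's form is the negative of your positive-definite $(\cdot,\cdot)$, so the adjacency $\langle [E^i_{j,k}],[E^i_{j,k+1}]\rangle_Z=1$ corresponds to $(\alpha_k,\alpha_{k+1})=-1$, exactly as you state.
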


\begin{proof} We fix $i \in \{1,2,3\}$ with $n_i>1$ and $j \in \{1, \ldots, \gamma_i-1 \}$. In order to simplify notation, we set
\[ \Lambda_1 : = \Lambda^i_{j,1}, \quad  w_k := w^i_{j,k}, \quad \lambda_k := \lambda^i_{j,k}. \]
We have for $k=0, \ldots , n_i-1$
\[ \langle \lambda_k , \lambda_k \rangle_Z = \langle \Lambda_1, \Lambda_1 \rangle_Z = - \frac{n_i -1}{n_i}. \]
Since $w_k$ and $w_{k'}$ commute if $|k-k'|>1$ and $w_k(\Lambda_1)=\Lambda_1$ for $k=2, \ldots n_i-1$, it suffices to prove Equation~(\ref{eqkk'}) for $k'=k+1$. We prove this by induction on $k$. We have
\[ \langle \lambda_1, \lambda_2 \rangle_Z = \langle w_1(\Lambda_1), w_2w_1(\Lambda_1) \rangle_Z= \langle \Lambda_1, w_1w_2w_1(\Lambda_1) \rangle_Z = \langle \Lambda_1, w_2w_1w_2(\Lambda_1) \rangle_Z = \langle \Lambda_1, w_1(\Lambda_1) \rangle_Z \]
by the relations in the Weyl group and
\[ \langle \Lambda_1, w_1(\Lambda_1) \rangle_Z = \langle \Lambda_1, \Lambda_1 + e_1 \rangle_Z =  - \frac{n_i -1}{n_i} +1 = \frac{1}{n_i}. \]
Now let $k=2, \ldots, n_i-2$. The induction step follows from
\begin{eqnarray*}
\langle \lambda_k, \lambda_{k+1} \rangle_Z & = & \langle w_k w_{k-1} \cdots w_1(\Lambda_1) , w_{k+1} w_k w_{k-1} \cdots w_1(\Lambda_1) \rangle_Z \\
 &=& \langle w_{k-1} \cdots w_1(\Lambda_1), w_k w_{k+1} w_k w_{k-1} \cdots w_1(\Lambda_1) \rangle_Z \\
 & = &  \langle w_{k-1} \cdots w_1(\Lambda_1), w_{k+1} w_k w_{k+1} w_{k-1} \cdots w_1(\Lambda_1) \rangle_Z \\
 & = & \langle \lambda_{k-1} , \lambda_k \rangle_Z.
\end{eqnarray*}
\end{proof}

Let $h^{!}: H_2(W; \QQ) \longrightarrow H_2(Z;\QQ)$ be the composition of maps 
\[
H_2(W; \QQ) \stackrel{\cong}{\longrightarrow } H^2_c(W;\QQ) \stackrel{(h|_Z)^\ast}{\longrightarrow} H^2_c(Z;\QQ)
\stackrel{\cong}{\longrightarrow } H_2(Z;\QQ).
\]
Note that the pull-back map $(h|_Z)^\ast$ is well-defined since the map $h|_Z : Z \to W$ is proper.
For $i=1,2,3$ and $j=1, \ldots, \gamma_i-1$, set 
\begin{eqnarray*}
\widehat{\delta}_0 & := & h^{!}\left(\frac{1}{|G|}\overline{\delta}_0\right),\\
\widehat{\delta}_1 & := & h^{!}(\overline{\delta}_1),\\
\widehat{\delta}^i_{j,0}  & := &  h^{!}(\overline{\delta}_j^i) \quad  \mbox{ if } n_i=1, \\
\widehat{\delta}^i_{j,k}  & := & h^{!}\left(\frac{1}{n_i} \overline{\delta}_j^i\right)  + (\lambda^i_{j,k} - \lambda^i_{j+1,k}) \quad\mbox{for }  k=0, \ldots, n_i-1 \mbox{ otherwise}. 
\end{eqnarray*}
The element $\widehat{\delta}_0$ is again in the radical of $H_2(Z; \QQ)$.
\begin{lemma} 
The elements $\widehat{\delta}_0$, $\widehat{\delta}_1$, $\widehat{\delta}^i_{j,k}$ $(i=1,2,3$, $j=1,\dots, \gamma_i-1$, $k=0,\dots, n_i-1)$ belong to $H_3(Y,Z;\QQ)$.
\end{lemma}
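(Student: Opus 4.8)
The plan is to exploit the identification, recalled in the text from the relative homology long exact sequence, of $H_3(Y,Z;\QQ)$ with $\ker\bigl(\iota_\ast\colon H_2(Z;\QQ)\to H_2(Y;\QQ)\bigr)$. Under this identification each listed class already lies in $H_2(Z;\QQ)$, so it suffices to show that $\iota_\ast$ annihilates it. Every one of the classes $\widehat\delta_0,\widehat\delta_1,\widehat\delta^i_{j,k}$ is the sum of a term $h^!(\alpha)$ with $\alpha\in H_2(W;\QQ)$ and a term $\lambda^i_{j,k}-\lambda^i_{j+1,k}$ (the latter present only when $n_i>1$). Hence the lemma reduces to the two independent assertions $\iota_\ast\circ h^!=0$ on $H_2(W;\QQ)$ and $\iota_\ast(\lambda^i_{j,k}-\lambda^i_{j+1,k})=0$.

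I would first settle the combinatorial assertion. For fixed $i$ with $n_i>1$ the classes $[E^i_{j,1}],\dots,[E^i_{j,n_i-1}]$ are the simple roots of a copy of $A_{n_i-1}$ whose Cartan matrix does not depend on $j$; consequently $\lambda^i_{j,k}=w^i_{j,k}\cdots w^i_{j,1}(\Lambda^i_{j,1})$ is a fixed rational combination $\sum_m c_{k,m}[E^i_{j,m}]$ in which the coefficients $c_{k,m}$ depend only on $i,k,m$. By the identity $\iota_\ast([E^i_{j,m}])=[E^i_m]$ established in the proof of Corollary~\ref{cor4}, valid for every $j=1,\dots,\gamma_i$, the image $\iota_\ast(\lambda^i_{j,k})=\sum_m c_{k,m}[E^i_m]$ is independent of $j$, and therefore $\iota_\ast(\lambda^i_{j,k}-\lambda^i_{j+1,k})=0$.

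The substance of the proof is the vanishing $\iota_\ast\circ h^!=0$, which I would obtain by base change along the Cartesian square
\[
\xymatrix{
Z \ar[r]^{\iota} \ar[d]_{h|_Z} & Y \ar[d]^{h} \\
W \ar[r]^{j} & X
}
\]
arising from $Z=h^{-1}(W)=Y\times_X W$, where $j\colon W\hookrightarrow X$ is the inclusion. Since $W,Z,X,Y$ are rational homology manifolds, Poincar\'{e} duality over $\QQ$ turns $h^!$ into $(h|_Z)^\ast\colon H^2_c(W;\QQ)\to H^2_c(Z;\QQ)$ and turns $\iota_\ast$ into the Gysin pushforward $\iota_!\colon H^2_c(Z;\QQ)\to H^4_c(Y;\QQ)$ of the complex-codimension-one embedding $\iota$. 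Thus, under these Poincar\'{e} dualities, the composite $\iota_\ast\circ h^!$ corresponds to $\iota_!\circ(h|_Z)^\ast$, which by the compatibility of Gysin maps with pullback along the above Cartesian square (here $h$ is proper, so $h^\ast$ is defined on compactly supported cohomology) equals $h^\ast\circ j_!$ with $j_!\colon H^2_c(W;\QQ)\to H^4_c(X;\QQ)$. Now $X=B_\varepsilon(0)/G$ is contractible, because the $G$-equivariant radial contraction of $B_\varepsilon(0)$ onto the origin descends to the quotient; hence $H^4_c(X;\QQ)\cong H_2(X;\QQ)=0$, so $j_!=0$ and therefore $\iota_!\circ(h|_Z)^\ast=0$, i.e.\ $\iota_\ast\circ h^!=0$.

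Assembling the pieces, $\iota_\ast(\widehat\delta_0)$, $\iota_\ast(\widehat\delta_1)$ and $\iota_\ast(\widehat\delta^i_{j,0})$ (the case $n_i=1$) vanish at once, while for $n_i>1$
\[
\iota_\ast(\widehat\delta^i_{j,k})=\iota_\ast\!\left(h^!\!\left(\tfrac{1}{n_i}\overline\delta^i_j\right)\right)+\iota_\ast(\lambda^i_{j,k}-\lambda^i_{j+1,k})=0,
\]
so every listed class lies in $\ker\iota_\ast=H_3(Y,Z;\QQ)$. I expect the only genuine difficulty to be the clean justification of the base-change identity $\iota_!\circ(h|_Z)^\ast=h^\ast\circ j_!$ in this analytic setting and the attendant use of rational Poincar\'{e} duality on the quotient spaces $W$ and $X$; granting these standard facts, the remaining steps are formal.
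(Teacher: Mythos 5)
Your proposal is correct, and its skeleton coincides with the paper's: the paper also identifies $H_3(Y,Z;\QQ)$ with ${\rm Ker}(\iota_\ast)$ and kills the differences $\lambda^i_{j,k}-\lambda^i_{j+1,k}$ exactly as you do, via $\iota_\ast[E^i_{j,k}]=[E^i_k]=\iota_\ast[E^i_{j+1,k}]$ (your observation that the coefficients $c_{k,m}$ of $\lambda^i_{j,k}$ in the simple roots are independent of $j$ is the same point, spelled out). Where you genuinely diverge is the vanishing $\iota_\ast\circ h^{!}=0$: the paper disposes of $\iota_\ast(\widehat{\delta}_0)=\iota_\ast(\widehat{\delta}_1)=0$ (and implicitly of the $h^{!}$-summand of $\widehat{\delta}^i_{j,k}$) in one line ``by Corollary~\ref{cor4}'', i.e.\ by the duality computation $\iota^\ast([E^i_k]^\ast)=\sum_j[E^i_{j,k}]^\ast$ and ${\rm Ker}(\iota^\ast)=H^2_c(Y;\QQ)$, leaving the compatibility of the splitting of Proposition~\ref{prop:H2Z} with $h^{!}$ implicit; you instead prove the stronger, cleanly stated fact that $\iota_\ast\circ h^{!}$ vanishes identically on $H_2(W;\QQ)$, by base change $\iota_!\circ(h|_Z)^\ast=h^\ast\circ j_!$ around the Cartesian square and the observation that $H^4_c(X;\QQ)\cong H_2(X;\QQ)=0$ since the $G$-equivariant radial contraction makes $X$ contractible. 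Your route is more geometric and arguably more transparent than the paper's citation, at the cost of the base-change identity you yourself flag. That identity does hold here, and it is worth recording why: $W$ is a Cartier divisor in $X$, cut out by the descent of $f-\eta$, meeting the singular axes of $X$ transversally; near each $A_{n_i-1}$ point the crepant resolution is locally a product (minimal resolution of the transversal slice times a disc), so $Z=h^{-1}(W)$ is the reduced total transform of expected codimension and no excess-intersection correction appears. With that justification supplied, your argument is complete and even yields slightly more than the paper states, since it shows all of $h^{!}H_2(W;\QQ)$, not just the particular classes listed, lies in $H_3(Y,Z;\QQ)$.
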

\begin{proof}
Recall that $H_3(Y,Z;\QQ)\cong {\rm Ker}(\iota_\ast)$. 
By Corollary~\ref{cor4}, we have $\iota_\ast(\widehat{\delta}_0)=\iota_\ast(\widehat{\delta}_1)=0$ and 
\[
\iota_\ast(\widehat{\delta}^i_{j,k} )=\iota_\ast(\lambda^i_{j,k} - \lambda^i_{j+1,k})=0,
\]
since $\iota_\ast[E^i_{j,k}] =[E^i_k]=\iota_\ast[E^i_{j+1,k}]$ for fixed $i,k$ and for all $j=1,\dots, \gamma_i-1$.
\end{proof}

\begin{lemma} \label{lemhat}
We have 
\begin{eqnarray}
\langle \widehat{\delta}_1 , \widehat{\delta}_1 \rangle_Z  & = &  2j_G-2, \label{eqhat11} \\
\langle \widehat{\delta}^i_{j,k} , \widehat{\delta}^i_{j,k} \rangle_Z & = & -2 \mbox{ for } i=1,2,3, j=1, \ldots , \gamma_i -1, k=0, \ldots, n_i-1, \label{eqhatijkijk} \\
\langle \widehat{\delta}_1 ,  \widehat{\delta}^i_{1,k} \rangle_Z & = & \langle \widehat{\delta}^i_{j,k}  \widehat{\delta}^i_{j+1,k} \rangle_Z = 1  \mbox{ for } i=1,2,3, j=1, \ldots , \gamma_i -2, k=0, \ldots, n_i-1,
\label{eqhatijk+1}
\end{eqnarray}
and $\langle \widehat{\delta}^i_{j,k} , \widehat{\delta}^{i'}_{j',k'} \rangle_Z=0$ otherwise.
\end{lemma}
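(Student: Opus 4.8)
The plan is to exploit two structural properties of the map $h^{!}$ that follow from the projection formula for the proper birational morphism $h|_Z\colon Z\to W$, and then to reduce every entry of the Gram matrix to the already-computed intersection numbers of Lemma~\ref{lembar} (for the ``horizontal'' part coming from $W$) and Lemma~\ref{lemAn} (for the ``vertical'' part supported on the exceptional fibres).

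First I would record the two tools. Since $h|_Z$ is proper and contracts each component $E^i_{j,k}$ to a point, one has $(h|_Z)_\ast[E^i_{j,k}]=0$ in $H_2(W;\QQ)$; combining this with the defining identity $\mathrm{PD}_Z(h^{!}\alpha)=(h|_Z)^{\ast}\mathrm{PD}_W(\alpha)$ and the projection formula $\langle (h|_Z)^{\ast}c,\,b\rangle_Z=\langle c,\,(h|_Z)_{\ast}b\rangle_W$ gives the orthogonality
\[
\langle h^{!}(\alpha),\,[E^i_{j,k}]\rangle_Z=\langle \mathrm{PD}_W(\alpha),\,(h|_Z)_{\ast}[E^i_{j,k}]\rangle_W=0 .
\]
In particular $h^{!}(\alpha)$ is orthogonal to every $\lambda^i_{j,k}$, since the latter lie in the span of the exceptional classes. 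Because $h|_Z$ has degree one, the same projection formula yields $(h|_Z)_{\ast}h^{!}(\beta)=\beta$, hence the compatibility
\[
\langle h^{!}(\alpha),\,h^{!}(\beta)\rangle_Z=\langle\alpha,\,\beta\rangle_W .
\]

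With these in hand, write $\widehat{\delta}^i_{j,k}=A^i_j+B^i_{j,k}$ with horizontal part $A^i_j:=h^{!}(\tfrac{1}{n_i}\overline{\delta}^i_j)$ and vertical part $B^i_{j,k}:=\lambda^i_{j,k}-\lambda^i_{j+1,k}$, and $\widehat{\delta}_1=h^{!}(\overline{\delta}_1)$. By the orthogonality above, every mixed pairing $\langle A,B\rangle$ vanishes, so each entry splits as a horizontal plus a vertical contribution. The horizontal contributions come from the compatibility together with Lemma~\ref{lembar}: for instance $\langle A^i_j,A^i_j\rangle=\tfrac{1}{n_i^2}\langle\overline{\delta}^i_j,\overline{\delta}^i_j\rangle_W=-\tfrac{2}{n_i}$, $\langle A^i_j,A^i_{j+1}\rangle=\tfrac{1}{n_i^2}\langle\overline{\delta}^i_j,\overline{\delta}^i_{j+1}\rangle_W=\tfrac{1}{n_i}$, and $\langle\widehat{\delta}_1,\widehat{\delta}_1\rangle_Z=\langle\overline{\delta}_1,\overline{\delta}_1\rangle_W=2j_G-2$ at once. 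The vertical contributions are governed by Lemma~\ref{lemAn} together with the fact, clear from the construction, that the exceptional fibres over distinct singular points of $W$ are disjoint, so $\langle\lambda^i_{j,k},\lambda^{i'}_{j',k'}\rangle_Z=0$ whenever $(i,j)\neq(i',j')$.

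Assembling the pieces is then pure bookkeeping. For the self-intersection the vertical part contributes $\langle\lambda^i_{j,k},\lambda^i_{j,k}\rangle_Z+\langle\lambda^i_{j+1,k},\lambda^i_{j+1,k}\rangle_Z=-\tfrac{2(n_i-1)}{n_i}$, and adding the horizontal $-\tfrac{2}{n_i}$ gives $-2$. For the adjacency $\langle\widehat{\delta}^i_{j,k},\widehat{\delta}^i_{j+1,k}\rangle_Z$ only the shared fibre index $j+1$ survives in the vertical part, contributing $-\langle\lambda^i_{j+1,k},\lambda^i_{j+1,k}\rangle_Z=\tfrac{n_i-1}{n_i}$, which together with the horizontal $\tfrac{1}{n_i}$ yields $1$; likewise $\langle\widehat{\delta}_1,\widehat{\delta}^i_{1,k}\rangle_Z=\tfrac{1}{n_i}\langle\overline{\delta}_1,\overline{\delta}^i_1\rangle_W=1$. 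The remaining ``otherwise zero'' cases fall out the same way: distinct arms or $|j-j'|\ge 2$ annihilate both parts, while---most delicately---for equal $(i,j)$ but $k\neq k'$ the horizontal $-\tfrac{2}{n_i}$ is exactly cancelled by the vertical $2\cdot\tfrac{1}{n_i}$ arising from the two off-diagonal terms in Lemma~\ref{lemAn}, giving $0$. \emph{The main obstacle} is establishing the two projection-formula identities cleanly on the possibly non-compact, and (for $W$) singular, spaces: one must verify that $h^{!}$, defined through $H^2_c$, is compatible with the $\QQ$-Poincar\'e duality intersection forms and that $h|_Z$ is genuinely birational of degree one, so that no correction factor intrudes into $\langle h^{!}\alpha,h^{!}\beta\rangle_Z=\langle\alpha,\beta\rangle_W$.
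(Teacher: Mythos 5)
Your proposal is correct and follows essentially the same route as the paper: the authors likewise split each $\widehat{\delta}^i_{j,k}$ into its $h^{!}$-part and its exceptional part and carry out exactly your bookkeeping (horizontal terms via Lemma~\ref{lembar}, vertical terms via Lemma~\ref{lemAn}, including the same cancellation $-\tfrac{2}{n_i}+2\cdot\tfrac{1}{n_i}=0$ for equal $(i,j)$, $k\neq k'$). The only difference is that you make explicit the projection-formula identities $\langle h^{!}\alpha,h^{!}\beta\rangle_Z=\langle\alpha,\beta\rangle_W$ and the orthogonality of $h^{!}$-classes to the exceptional classes, which the paper uses silently; your flagged ``main obstacle'' is indeed handled by the paper's definition of $h^{!}$ through $H^2_c$ and the fact that $W=V/G$ satisfies $\QQ$-Poincar\'e duality.
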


\begin{proof} Equation~(\ref{eqhat11}) is obvious.

Let $i=1,2,3$. For $n_i=1$, Equation~(\ref{eqhatijkijk}) is obvious. For $n_i>1$, $j=1, \ldots , \gamma_i -1$ and $k=0, \ldots, n_i-1$, we have
\[ \langle \widehat{\delta}^i_{j,k} , \widehat{\delta}^i_{j,k} \rangle_Z = \frac{1}{n_i^2} \langle h^{!}(\overline{\delta}_j^i) , h^{!}(\overline{\delta}_j^i) \rangle_Z + \langle \lambda^i_{j,k}, \lambda^i_{j,k} \rangle_Z + \langle \lambda^i_{j+1,k} , \lambda^i_{j+1,k} \rangle_Z = \frac{-2n_i}{n_i^2} - 2 \frac{n_i-1}{n_i} = -2
\]
by Lemma~\ref{lembar} and Lemma~\ref{lemAn}.

It is clear that $\langle \widehat{\delta}^i_{j,k} , \widehat{\delta}^{i'}_{j',k'} \rangle_Z=0$ for $i \neq i'$ or $i=i'$ and $|j'-j|>1$. 
Let $n_i>1$, $j=1, \ldots , \gamma_i-1$, and $0 \leq k,k' \leq n_i-1$, $k \neq k'$. Then we have
\[ \langle \widehat{\delta}^i_{j,k}, \widehat{\delta}^i_{j,k'} \rangle_Z = \frac{1}{n_i^2} \langle h^{!}(\overline{\delta}_j^i) , h^{!}(\overline{\delta}_j^i) \rangle_Z + \langle \lambda^i_{j,k}, \lambda^i_{j,k'} \rangle_Z + \langle \lambda^i_{j+1,k} , \lambda^i_{j+1,k'} \rangle_Z = \frac{-2}{n_i} +2 \frac{1}{n_i} = 0,
\]
again by Lemma~\ref{lembar} and Lemma~\ref{lemAn}. Now let $j=1, \ldots, \gamma_i-2$ and $0 \leq k,k' \leq n_i-1$. Again, Equation~(\ref{eqhatijk+1}) is obvious for $n_i=1$. Therefore, let $n_i > 1$. First consider the case $k'=k$. Then we have
\[ \langle \widehat{\delta}^i_{j,k}, \widehat{\delta}^i_{j+1,k} \rangle_Z = \frac{1}{n_i^2} \langle h^{!}(\overline{\delta}_j^i) , h^{!}(\overline{\delta}_{j+1}^i) \rangle_Z + \langle - \lambda^i_{j+1,k} ,
\lambda^i_{j+1,k} \rangle_Z = \frac{1}{n_i} - \left( - \frac{n_i - 1}{n_i} \right) = 1. \]
Finally we have for $k' \neq k$
\[ \langle \widehat{\delta}^i_{j,k}, \widehat{\delta}^i_{j+1,k'} \rangle_Z = \frac{1}{n_i^2} \langle h^{!}(\overline{\delta}_j^i) , h^{!}(\overline{\delta}_{j+1}^i) \rangle_Z + \langle - \lambda^i_{j+1,k} ,
\lambda^i_{j+1,k'} \rangle_Z = \frac{1}{n_i} - \frac{1}{n_i} = 0. \]
\end{proof}

Set
\[ \widehat{\mathcal B} := \{ \widehat{\delta}_1 \} \cup \{ \widehat{\delta}^i_{j,k} \, | \,  i=1,2,3, \ j=1, \ldots , \gamma_i -1, \ k=0, \ldots, n_i-1 \}. \]

To summarize the results in this section, we have
\begin{theorem}
The set $\widehat{\mathcal B} \cup \{ \widehat{\delta}_0 \}$ is contained in $H_3(Y,Z; \QQ) \subset H_2(Z; \QQ)$ and  $\widehat{\mathcal B}$ represents a $\QQ$-basis of $H_3(Y,Z; \QQ)/\langle \widehat{\delta}_0 \rangle$ whose intersection numbers are given by the Coxeter-Dynkin diagram as in Fig.~\ref{FighatTpqr}.
\end{theorem}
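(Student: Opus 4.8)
The plan is to reduce the statement to a single linear-independence assertion and then to establish it through the non-degeneracy of the intersection form on the span of $\widehat{\mathcal B}$, the decisive input being the cusp inequality~\eqref{1.1}.

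First I would record what is already available. The inclusion $\widehat{\mathcal B}\cup\{\widehat{\delta}_0\}\subset H_3(Y,Z;\QQ)$ is exactly the preceding lemma, and the intersection numbers of the elements of $\widehat{\mathcal B}$ are precisely those of Lemma~\ref{lemhat}; reading them off yields the star-shaped diagram of Fig.~\ref{FighatTpqr}, with central vertex $\widehat{\delta}_1$ of self-intersection $2j_G-2$ and, for each $i$ with $\gamma_i>1$, exactly $n_i$ arms of length $\gamma_i-1$ made of cycles of self-intersection $-2$. Thus the only remaining point is that $\widehat{\mathcal B}$ represents a basis of $H_3(Y,Z;\QQ)/\langle\widehat{\delta}_0\rangle$. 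Counting, $|\widehat{\mathcal B}|=1+\sum_{i=1}^3 n_i(\gamma_i-1)$, so $|\widehat{\mathcal B}\cup\{\widehat{\delta}_0\}|=2+\sum_{i=1}^3 n_i(\gamma_i-1)=\dim_\QQ H_3(Y,Z;\QQ)$ by~\eqref{eq:rank}; hence it suffices to prove that $\widehat{\mathcal B}\cup\{\widehat{\delta}_0\}$ is linearly independent.

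The heart of the argument is the determinant of the Gram matrix $M$ of $\widehat{\mathcal B}$ for $\langle-,-\rangle_Z$. Ordering $\widehat{\delta}_1$ first, Lemma~\ref{lemhat} shows that $M$ is the star-shaped matrix whose centre has diagonal entry $2j_G-2$, whose arms are $A_{\gamma_i-1}$ blocks (diagonal $-2$, adjacent entries $1$), and whose centre is joined to the first vertex of every arm by an entry $1$. Taking the Schur complement of the arm blocks and using that the $A_\ell$ block $A$ satisfies $\det A=(-1)^\ell(\ell+1)$ and $(A^{-1})_{11}=-\ell/(\ell+1)$, I obtain
\[ \det M = \Big(\prod_{i=1}^{3}\big((-1)^{\gamma_i-1}\gamma_i\big)^{n_i}\Big)\Big( (2j_G-2)+\sum_{i=1}^3 n_i\,\frac{\gamma_i-1}{\gamma_i}\Big). \]
The second factor simplifies: by Proposition~\ref{Prop|G|} one has $2j_G-2+\sum_i n_i=|G|$, while $n_i/\gamma_i=|G|/\gamma'_i$ since $n_i=|K_i|$ and $\gamma_i=\gamma'_i/|G/K_i|$, so it equals $|G|\big(1-\sum_{i=1}^3 1/\gamma'_i\big)$. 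By~\eqref{1.1} we have $1-\sum_i 1/\gamma'_i>0$, so this factor is strictly positive and $\det M\neq 0$. Therefore $M$ is non-degenerate, $\widehat{\mathcal B}$ is linearly independent, and it spans a subspace $U\subset H_3(Y,Z;\QQ)$ on which $\langle-,-\rangle_Z$ is non-degenerate.

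It remains to place $\widehat{\delta}_0$. Since $\delta_0$ generates the rank-one radical of $H_2(V;\ZZ)$ and is $G$-invariant by Lemma~\ref{lemGaction}, its image $\overline{\delta}_0=(\pi|_V)_*(\delta_0)$ is nonzero, because $(\pi|_V)_*$ is injective on $G$-invariant classes over $\QQ$; and as $h|_Z$ is birational one has $(h|_Z)_*\circ h^{!}=\mathrm{id}$, so $h^{!}$ is injective and $\widehat{\delta}_0=h^{!}(\tfrac{1}{|G|}\overline{\delta}_0)\neq 0$. Because $\widehat{\delta}_0$ lies in the radical of $H_2(Z;\QQ)$ it is orthogonal to all of $U$, and non-degeneracy on $U$ forces $\widehat{\delta}_0\notin U$. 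Hence $\widehat{\mathcal B}\cup\{\widehat{\delta}_0\}$ is linearly independent of cardinality $\dim_\QQ H_3(Y,Z;\QQ)$, so it is a basis, $\langle\widehat{\delta}_0\rangle$ is one-dimensional, and $\widehat{\mathcal B}$ descends to a basis of $H_3(Y,Z;\QQ)/\langle\widehat{\delta}_0\rangle$, completing the proof. I expect the main obstacle to be the determinant evaluation, and in particular recognising that the Schur-complement scalar collapses exactly to $|G|\big(1-\sum_i 1/\gamma'_i\big)$, so that the defining cusp inequality~\eqref{1.1} is precisely what guarantees non-degeneracy; the injectivity of $h^{!}$ needed for $\widehat{\delta}_0\neq0$ is a secondary but necessary point.
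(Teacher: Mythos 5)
Your proposal is correct, and it actually supplies more detail than the paper itself does: the paper states this theorem purely as a summary (``To summarize the results in this section\dots'') of the preceding lemmas --- membership of $\widehat{\mathcal B}\cup\{\widehat{\delta}_0\}$ in $H_3(Y,Z;\QQ)$, the intersection numbers of Lemma~\ref{lemhat}, and the dimension count \eqref{eq:rank} --- leaving the linear-independence step implicit. You make that step explicit, and your Schur-complement evaluation of the Gram determinant checks out: with $2j_G-2=|G|-\sum_{i=1}^3 n_i$ from Proposition~\ref{Prop|G|} and $n_i/\gamma_i=|G|/\gamma'_i$, the scalar factor indeed collapses to $|G|\bigl(1-\sum_{i=1}^3 1/\gamma'_i\bigr)$, which is positive exactly by the cusp condition \eqref{1.1}; as a sanity check, for trivial $G$ your formula gives determinant of absolute value $\gamma'_1\gamma'_2\gamma'_3\bigl(1-\sum_i 1/\gamma'_i\bigr)=\Delta(\gamma'_1,\gamma'_2,\gamma'_3)$, the classical $T_{p,q,r}$ discriminant. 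Your handling of $\widehat{\delta}_0$ is also sound and again goes beyond what the paper records (the paper only remarks that $\widehat{\delta}_0$ lies in the radical): nonvanishing of $\overline{\delta}_0$ via transfer-injectivity of $(\pi|_V)_*$ on $G$-invariants is fine since $\delta_0$ is $G$-invariant by Lemma~\ref{lemGaction}, and injectivity of $h^{!}$ via $(h|_Z)_*\circ h^{!}=\mathrm{id}$ for the proper degree-one map $h|_Z$ is legitimate over $\QQ$ because $W$ is a rational homology manifold; alternatively you could get injectivity of $h^{!}$ for free from the (dualized) splitting in Proposition~\ref{prop:H2Z}. What your route buys is a self-contained proof in which nondegeneracy of the form on the span of $\widehat{\mathcal B}$ is tied precisely to the defining inequality \eqref{1.1}; what the paper's implicit route buys is brevity, since all the intersection-theoretic content is already packaged in Lemmas~\ref{lembar}--\ref{lemhat} and the rank formula \eqref{eq:rank}.
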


\begin{figure}
$$
\xymatrix{ & & & & & & \\
 *{\bullet}   \ar@{-}[r] \ar@{}^{\widehat{\delta}_{\gamma_2-1,n_2-1}^2}[u]  \ar@{}^{\vdots}[d] & {\cdots}  \ar@{-}[r]  &  *{\bullet} \ar@{-}[dr]    \ar@{}^{\widehat{\delta}_{1,n_2-1}^2}[u] \ar@{}_{\vdots}[d] &  &  *{\bullet} \ar@{-}[dl]  \ar@{-}[r]  \ar@{}_{\widehat{\delta}_{1,n_3-1}^3}[u] \ar@{}^{\vdots}[d] & {\cdots} \ar@{-}[r]  & *{\bullet} \ar@{}_{\widehat{\delta}_{\gamma_3-1,n_3-1}^3}[u]  \ar@{}_{\vdots}[d] \\
 *{\bullet}  \ar@{-}[r] \ar@{}_{\widehat{\delta}_{\gamma_2-1,0}^2}[d]  & {\cdots} \ar@{-}[r]  &  *{\bullet} \ar@{-}[r]    \ar@{}_{\widehat{\delta}_{1,0}^2}[d]  & *+=[o][F-:<3pt>]\txt{\ $\scriptstyle{2j_G-2}$\,\, } \ar@{-}[dl] \ar@{-}[d] \ar@{}_{\widehat{\delta}_1}[u]  & *{\bullet} \ar@{-}[l]  \ar@{-}[r]  \ar@{}^{\widehat{\delta}_{1,0}^3}[d]  & {\cdots} \ar@{-}[r]  & *{\bullet} \ar@{}^{\widehat{\delta}_{\gamma_3-1,0}^3}[d]  \\
& \ar@{}_{\widehat{\delta}_{1,n_1-1}^1}[r] &   *{\bullet} \ar@{-}[d]  \ar@{}_*+{\cdots}[r] & *{\bullet} \ar@{-}[d] \ar@{}_{\widehat{\delta}_{1,0}^1}[r] & & &  \\
 & & {\vdots} \ar@{-}[d] & {\vdots} \ar@{-}[d]  & & & \\
 & \ar@{}_{\widehat{\delta}_{\gamma_1-1,n_1-1}^1}[r] & *{\bullet}  \ar@{}^*+{\cdots}[r]   & *{\bullet}  \ar@{}_{\widehat{\delta}_{\gamma_1-1,0}^1}[r] & & &
  }
$$
\caption{Coxeter-Dynkin diagram corresponding to $\widehat{\mathcal B}$}  \label{FighatTpqr}
\end{figure}
We expect the following.
\begin{conjecture}
The set $\widehat{\mathcal B} \cup \{ \widehat{\delta}_0 \}$ represents a $\ZZ$-basis of $H_3(Y,Z; \ZZ)$.
\end{conjecture}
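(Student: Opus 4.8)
The plan is to reduce the conjecture to a statement about the lattice $H_2(Z;\ZZ)$ and then treat integrality and unimodular generation separately. First I would promote the exact sequence identifying $H_3(Y,Z;\QQ)$ with ${\rm Ker}(\iota_\ast)$ to the integral level, showing
$H_3(Y,Z;\ZZ)\cong{\rm Ker}\left(\iota_\ast\colon H_2(Z;\ZZ)\to H_2(Y;\ZZ)\right)$. The inputs needed are $H_3(Y;\ZZ)=0$ and that $H_2(Z;\ZZ)$ (and $H_2(Y;\ZZ)$) are torsion free: for $Y=G\text{-}{\rm Hilb}(\CC^3)$ with $G$ abelian this follows from the toric cell structure of the crepant resolution, and for the smooth surface $Z$, a resolution of the normal surface $W$ with $A_{n_i-1}$ singularities, from the standard description of $H_2$ of a resolution of rational surface singularities. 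Since a subgroup of a free abelian group is free, this already makes $H_3(Y,Z;\ZZ)$ a lattice, and the earlier lemma shows that $\widehat{\mathcal B}\cup\{\widehat\delta_0\}$ lands in ${\rm Ker}(\iota_\ast)$ over $\QQ$; hence the conjecture becomes the assertion that, once integrality is established, $\widehat{\mathcal B}\cup\{\widehat\delta_0\}$ is a $\ZZ$-basis of ${\rm Ker}(\iota_\ast)$.

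The second step is integrality: each class must actually lie in $H_2(Z;\ZZ)$, which is not obvious because $h^{!}$ carries the denominators $\tfrac1{|G|}$ and $\tfrac1{n_i}$ and the weights $\lambda^i_{j,k}$ have denominator $n_i$. I would use the geometric description of $h^{!}$ as a total transform for $h|_Z\colon Z\to W$: the class $h^{!}(\overline\delta)$ equals the strict transform of a representing cycle, corrected by the unique rational combination of exceptional curves making it orthogonal to the exceptional lattice, and that correction is governed precisely by the fundamental weights of the $A_{n_i-1}$ root systems. The normalisations in the definitions of $\widehat\delta_1$, $\widehat\delta^i_{j,0}$ and $\widehat\delta_0$ are tailored so that these weight contributions cancel against the added terms $\lambda^i_{j,0}-\lambda^i_{j+1,0}=\Lambda^i_{j,1}-\Lambda^i_{j+1,1}$, leaving an honest integral strict transform. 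The remaining classes then reduce to these base cases: since $w^i_{j,k}$ is a reflection, $(w^i_{j,k}-1)$ maps the weight lattice into $\ZZ\,[E^i_{j,k}]$, so that $\lambda^i_{j,k}-\lambda^i_{j,k-1}=(w^i_{j,k}-1)(\lambda^i_{j,k-1})$ is integral; consequently each difference $\widehat\delta^i_{j,k}-\widehat\delta^i_{j,k-1}$ lies in the exceptional root lattice and is manifestly integral. Thus integrality of the whole set follows from integrality of the base classes $\widehat\delta_1$, $\widehat\delta^i_{j,0}$ and $\widehat\delta_0$.

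The third and hardest step is unimodular generation, which is where I expect the main obstacle to lie and which is the reason the statement is only conjectured. Here I would exhibit an explicit integral generating set of ${\rm Ker}(\iota_\ast)$ coming from the geometry, namely the differences $[E^i_{j,k}]-[E^i_{j+1,k}]$ (which lie in the kernel because $\iota_\ast[E^i_{j,k}]=[E^i_k]$ is independent of $j$ by Corollary~\ref{cor4}) together with strict transforms of $2$-cycles representing $\overline\delta_1$ and $\overline\delta^i_j$, and then compare this set with $\widehat{\mathcal B}\cup\{\widehat\delta_0\}$. Using the expansions from the integrality step, the change-of-basis matrix is block triangular, the exceptional blocks being the transition between the $\lambda^i_{j,k}$ and the simple roots $[E^i_{j,k}]$; the remaining task is to prove its determinant is $\pm1$. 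Matching the two sets rationally is exactly the content of the preceding theorem, but showing that their integral span is all of ${\rm Ker}(\iota_\ast)$, rather than a proper finite-index sublattice, requires precise integral control of $H_2(Z;\ZZ)$ and of the cokernel of $\iota_\ast$ in $H_2(Y;\ZZ)$, where the integral refinement of the Ito--Reid McKay correspondence is needed. A route around a direct determinant evaluation would be to verify primitivity of the span prime by prime: for each prime $p$ dividing some $n_i$ or $|G|$, reduce the integral intersection and incidence data modulo $p$ and check that the reductions of $\widehat{\mathcal B}\cup\{\widehat\delta_0\}$ remain linearly independent over $\FF_p$.
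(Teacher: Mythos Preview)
The statement you are addressing is labeled in the paper as a \emph{Conjecture}, and the paper gives no proof of it whatsoever; there is therefore no argument in the paper against which to compare your proposal.

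As for the proposal itself: it is a coherent roadmap rather than a proof, and you acknowledge as much. Your first reduction to ${\rm Ker}(\iota_\ast)$ on integral homology is reasonable, resting on the toric cell structure of $G\text{-}{\rm Hilb}(\CC^3)$ and the smoothness of $Z$. In the integrality step, the argument that successive differences $\widehat\delta^i_{j,k}-\widehat\delta^i_{j,k-1}$ are integral via $(w^i_{j,k}-1)(\lambda^i_{j,k-1})\in\ZZ[E^i_{j,k}]$ is correct and clean. However, the integrality of the base classes is only asserted, not shown: for $\widehat\delta_0=h^{!}\bigl(\tfrac{1}{|G|}\overline\delta_0\bigr)$ there is no $\lambda$-term to cancel against, so integrality must come from the geometry of the representing torus cycle (which lies away from the singular locus of $W$), a point you do not make; and for $\widehat\delta_1=h^{!}(\overline\delta_1)$ there is neither a $\tfrac{1}{n_i}$ factor nor a $\lambda$-term, so the claimed ``tailored cancellation'' does not apply and a separate argument is needed for how $\overline\delta_1$ meets the $A_{n_i-1}$ points. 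The third step you explicitly leave open, offering two possible strategies (determinant of a change-of-basis matrix, or prime-by-prime primitivity) without carrying either out; you correctly flag this as the genuine obstruction, and it is precisely why the paper states the result only as a conjecture.
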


\begin{remark} The classes $\widehat{\delta}^i_{j,k}$ ($i=1,2,3$, $j=1, \ldots, \gamma_i-1$, $k=0, \ldots, n_i-1$) are vanishing classes since
$\lambda^i_{j,k} -\lambda^i_{j+1,k}$ tends to zero
when $\eta$ tends to zero.
\end{remark}



\begin{thebibliography}{ET1}

\linespread{0.75}
\setlength{\parskip}{0.0ex}
\small{

\bibitem[Ar]{Ar} V.~I.~Arnold: Critical points of smooth functions and
their  normal forms. Usp. Math. Nauk. {\bf 30}:5, 3--65 (1975) (Engl.
translation in Russ. Math. Surv. {\bf 30}:5, 1--75 (1975)).

\bibitem[Do]{Do} I.~V.~Dolgachev: Quotient-conical singularities on
complex surfaces. Funkcional. Anal. i Prilo\v{z}en. {\bf 8}, no. 2, 75--76 (1974) (Engl.
translation in Funct. Anal. Appl. {\bf 8}, 160--161 (1974)).

\bibitem[ET1]{ET1} W.~Ebeling, A.~Takahashi: Strange duality of weighted homogeneous polynomials. Compositio Math. {\bf 147}, 1413--1433 (2011).  

\bibitem[ET2]{ET} W.~Ebeling, A.~Takahashi: Mirror symmetry between orbifold curves and cusp singularities with group action. Int. Math. Res. Not. doi: 10.1093/imrn/rns115.

\bibitem[Ga]{Ga} A.~M.~Gabri\'elov: Dynkin diagrams for unimodal singularities. Funkcional. Anal. i Prilo\v{z}en. {\bf 8}, no. 3, 1--6 (1974) (English translation in Funct. Anal. Appl. {\bf 8}, no. 3, 192--196 (1974)).

\bibitem[IR]{IR} Y.~Ito, M.~Reid: The McKay correspondence for finite subgroups of ${\rm SL}(3,\CC)$. In: Higher-dimensional complex varieties (Trento, 1994), de Gruyter, Berlin, 1996, pp.~221--240.




}
\end{thebibliography}
\end{document}